\crefname{equation}{}{}
\Crefname{equation}{Equation}{Equations}
\crefname{figure}{Fig.}{Figs.}
\newtheorem{theorem}{Theorem}
\newtheorem{proposition}[theorem]{Proposition}%
\newtheorem{corollary}[theorem]{Corollary}
\newtheorem{lemma}[theorem]{Lemma}
\newtheorem{fact}[theorem]{Fact}
\numberwithin{equation}{section}
\newcommand{\Br}{\mathbb{R}}
\newcommand{\tr}{{\rm tr}}
\newcommand{\diag}{{\rm diag}}
\newcommand{\St}{{\rm St}}
\newcommand{\grad}{\textnormal{grad}\,}
\newcommand{\br}{\mathbb{R}}
\newcommand{\vvx}{\bm{x}}
\newcommand{\vvy}{\bm{y}}
\newcommand{\vvz}{\bm{z}}
\newcommand{\ba}{\begin{array}}
	\newcommand{\ea}{\end{array}}
\newcommand{\PCal}{\mathcal{P}}
\newcommand{\EE}{{\mathbb{E}}}
\newcommand{\NCal}{\mathcal{N}}
\def\bI{\bm{I}}
\def\bA{\bm{A}}
\def\bM{\bm{M}}
\def\bQ{\bm{Q}}
\def\bU{\bm{U}}
\def\bV{\bm{V}}
\def\bX{\bm{X}}
\def\bZ{\bm{Z}}
\def\bPi{\bm{\Pi}}
\def\ba{\bm{a}}
\def\bq{\bm{q}}
\def\bx{\bm{x}}
\title{On the Estimation Performance of Generalized Power Method for Heteroscedastic Probabilistic PCA}
\author{Jinxin Wang\thanks{Department of Systems Engineering and Engineering Management, The Chinese University of Hong Kong.}
	\and Chonghe Jiang\thanks{Department of Systems Engineering and Engineering Management, The Chinese University of Hong Kong.
	}
	\and Huikang Liu\thanks{Research Institute for Interdisciplinary Sciences, School of Information Management and Engineering, Shanghai University of Finance and Economics.
	}
	\and Anthony Man-Cho So\thanks{Department of Systems Engineering and Engineering Management, The Chinese University of Hong Kong.
	}
}
\begin{document}
	
	\maketitle
	
	\begin{abstract}
The heteroscedastic probabilistic principal component analysis (PCA) technique, a variant of the classic PCA that considers data heterogeneity, is receiving more and more attention in the data science and signal processing communities. In this paper, to estimate the underlying low-dimensional linear subspace (simply called \emph{ground truth}) from available heterogeneous data samples, we consider the associated non-convex maximum-likelihood estimation problem, which involves maximizing a sum of heterogeneous quadratic forms over an orthogonality constraint (HQPOC). We propose a first-order method---generalized power method (GPM)---to tackle the problem and establish its \emph{estimation performance} guarantee. Specifically, we show that, given a suitable initialization, the distances between the iterates generated by GPM and the ground truth decrease at least geometrically to some threshold associated with the residual part of certain ``population-residual decomposition''. In establishing the estimation performance result, we prove a novel local error bound property of another closely related optimization problem, namely quadratic optimization with orthogonality constraint (QPOC), which is new and can be of independent interest. Numerical experiments are conducted to demonstrate the superior performance of GPM in both Gaussian noise and sub-Gaussian noise settings.
	\end{abstract}
	
	
\section{Introduction}
\label{s:intro}
Principal component analysis (PCA) is a classic yet powerful method for dimensionality reduction. It can be derived from a celebrated probabilistic model on the observed data called the probabilistic PCA \cite{tipping1999probabilistic,bishop2006pattern}. One typical assumption in probabilistic PCA is that the noise level of all data samples remains the same. However, due to varying conditions of data acquisition, heterogeneous quality among data samples (i.e., with different noise variances) is ubiquitous in practice. For instance, in the field of air quality monitoring (see \cite{hong2021heppcat} and the references therein), two kinds of sensors for collecting data might exist. Some sensors are of high quality and are regularly maintained. Thus, the data obtained are very accurate. Other sensors are of medium or low quality. As a result, the obtained data can be noisier. One naive approach to coping with data heterogeneity is to pretend it is not there and apply PCA. Despite the sound solvability of PCA, its performance can be substantially degraded when it is applied to heterogeneous data samples because PCA treats all samples uniformly \cite{hong2018asymptotic,hong2023optimally}. To tackle such kind of heterogeneity, a new probabilistic model, called the heteroscedastic probabilistic PCA (HPPCA), has been proposed and studied recently; see, e.g., \cite{hong2019probabilistic,hong2021heppcat,gilman2022semidefinite}. Although HPPCA has been shown to yield performance gains in applications with heterogeneous noise levels across data samples, the resulting non-convex optimization problem gives rise to new computational challenges. Specifically, if we assume that the heterogeneous noise variances and the underlying signal strength are known and focus on estimating the underlying ground-truth low-dimensional linear subspace (see \cite[Section 4.2.2]{gilman2022semidefinite}), then the associated maximum-likelihood estimation (MLE) problem involves maximizing a sum of heterogeneous quadratic forms with an orthogonality constraint (HQPOC), which is quite challenging and cannot be directly solved by performing the singular value decomposition (SVD). We note in passing that while our paper focuses on heterogeneous noise across data samples, there are various papers considering heterogeneous noise across features; see, e.g., \cite{zhang2022heteroskedastic,leeb2021matrix,leeb2021optimal,zhou2023deflated}.

\paragraph{\textbf{Related works}} There are mainly two classes of numerical approaches for tackling HPPCA or general HQPOC problems: \emph{First-order methods} and \emph{semidefinite relaxation-based methods}. On one hand, since HPPCA generalizes the classic PCA, \cite{bolla1998extrema,breloy2021majorization} proposed a variant of the well-known power-type methods, called the generalized power method (GPM) (cf. \cite{journee2010generalized}), to tackle the non-convex HPPCA/HQPOC problem and showed that GPM would output certain critical points. The advantage of GPM is that it runs fast and can be applied to large-scale problems, as each iteration of GPM involves only matrix-vector multiplication and projection onto the Stiefel manifold. Despite GPM's simplicity and excellent empirical performance in various applications, its theoretical underpinning is still limited. Existing results on GPM for HPPCA or general HQPOC problems do not guarantee that the iterates converge to a global optimum, let alone the (linear) convergence rate of the iterates. Similar theoretical limitations exist when it comes to other first-order methods, such as the Riemannian gradient descent method (RGD) \cite{absil2009optimization,hu2020brief,boumal2023introduction}. On the other hand, following the elegant methodology of semidefinite relaxation (SDR) for general non-convex quadratically constrained quadratic programs \cite{luo2010semidefinite,nesterov2000semidefinite}, the authors in \cite{gilman2022semidefinite} proposed a novel SDR for HQPOC and showed that it is tight under certain non-trivial conditions (meaning that a global optimal solution to the non-convex HQPOC can be found by solving the convex SDR). Moreover, they verified the tightness conditions for the HPPCA problem. However, solving large-scale semidefinite programs (SDPs) is rather computationally costly. To alleviate the computational burden, the authors in \cite{gilman2022semidefinite} developed an alternative two-step strategy, where an iterative method is applied to obtain a candidate stationary point and then the global optimality of such a point is certified by solving a smaller SDP feasibility problem. Nevertheless, the computational cost is still high compared with pure first-order methods such as GPM and RGD. The above discussions motivate us to pursue the best of both approaches and ask the following question: 
	Can we establish \emph{strong theoretical guarantees} for \emph{lightweight} first-order methods (e.g., GPM) when applied to the HPPCA problem?
\paragraph{\textbf{Our contributions}} 
In this paper, we establish the \emph{estimation performance} guarantee of GPM (\Cref{thm: est_performace}) when it is applied to the HPPCA problem. Specifically, given a carefully designed initialization (e.g., by PCA), the \emph{estimation error} of the iterates of GPM, which is defined as the distances between the iterates generated by GPM and the ground truth, decreases at least \emph{geometrically} to some threshold. This provides, for the first time, an answer to the aforementioned question. To that end, we start by identifying a useful ``\emph{population-residual decomposition}'' (\Cref{lemma: snd}) of the HPPCA optimization problem.
Here, the population part corresponds to the HPPCA problem with \emph{infinite} data sample observations (see, e.g., \cite{chen2019gradient}) and turns out to be the well-studied quadratic optimization problem with orthogonality constraint (QPOC) \cite{liu2019quadratic,so2011moment}. Focusing on the QPOC, we show that it possesses a \emph{local error bound} property (\Cref{thm: gpmeb}), i.e., the distance of every point (near a global maximizer) to the set of global maximizers of QPOC can be bounded by certain residual measure associated with the GPM (for QPOC). This result is significant, as our local error bound property characterizes the growth of the objective function around the set of global maximizers and implies, among other things, that when applying the GPM to QPOC with a good initialization, the iterates generated by GPM converge \emph{linearly} to a global maximizer of QPOC. We remark that the authors in \cite{liu2019quadratic} showed a local error bound result for QPOC but with the residual measure being the Frobenius norm of the Riemannian gradients. This result was then employed to analyze the convergence rate of RGD. By contrast, we aim to analyze the convergence rate of GPM so that a different local error bound property with the optimality residual measure being related to GPM is required. Besides, our newly established local error bound result complements existing local error bound results in \cite{liu2017estimation,liu2019quadratic,wang2021linear,zhu2021orthogonal,chen2021local} for non-convex manifold optimization problems and can be of independent interest. Then, by incorporating the residual part into the population part and considering the original HPPCA problem, we show that with an initialization obtained by the classic PCA, the distances between the iterates generated by the GPM (for HPPCA) and the underlying ground-truth low-dimensional linear subspace decrease at least geometrically to some threshold associated with the residual part. Numerical experiments are conducted to validate our theoretical findings and demonstrate the superior performance of GPM.


\section{Preliminaries and Population-Residual Decomposition}
\label{s:ppca}
In this section, we first introduce the probabilistic model of HPPCA (in \eqref{eq: hppca_pm}) and provide the associated non-convex MLE formulation (in \eqref{eq: hppca-mle}). Then, to tackle such a non-convex MLE problem, we establish a specific
\emph{population-residual decomposition} for HPPCA in \Cref{lemma: snd}, which will play a crucial rule in the theoretical analysis part.


\subsection{Heteroscedastic probabilisitic PCA}
The HPPCA model \cite{hong2019probabilistic,hong2021heppcat,gilman2022semidefinite} assumes that $L$ known data groups of $n_1,\dots,n_L$ samples with given noise variances $v_1,\dots,v_L$, respectively are generated from the following probabilistic model:
\begin{equation} \label{eq: hppca_pm}
	\vvy_{l,i} = \bm{Q} \bm{\Theta} \vvz_{l,i} + \bm{\eta}_{l,i} \in \Br^d, \quad \forall l \in [L], i\in [n_l].
\end{equation}
Here, $[L] = \{1,2,\dots,L\}$; $[n_l] = \{1,2,\dots,n_l \}$; $\bm{Q}\in {\rm St}(d,K)=\{ \bX \in \mathbb{R}^{d \times K}| \bX^\top \bX = \bm{I}_K, d>K \}$ represents the underlying subspace (called the \emph{ground truth}); $\bm{\Theta} = {\rm diag}([\sqrt{\lambda_1}, \dots, \sqrt{\lambda_K}]^\top) \in \mathbb{R}^{K \times K} $ with $\lambda_1 > \lambda_2 > \dots > \lambda_K > 0$ being the known signal strength; $\vvz_{l,i} \overset{\rm iid}{\sim} \NCal(\bm{0},\bm{I}_K)$ are latent variables; and $\bm{\eta}_{l,i}\overset{ \rm iid}{\sim} \NCal(\bm{0},v_l \bm{I}_d)$ with $v_1 > v_2 > \dots > v_L >0$ are additive heteroscedastic Gaussian noises that are independent of $\{ \vvz_{l,i}| l\in [L], i \in [n_l]\}$. We use $n=\sum_{l=1}^L n_l$ to denote the total number of data samples.

Under the above setting, we know $\vvy_{l,i} \overset{\rm iid}{\sim} \mathcal{N}(\bm{0}, \bm{Q} \bm{\Theta}^2 \bm{Q}^\top + v_{l} \bm{I}_{d})$. Through similar derivations to that of \cite[Section \uppercase\expandafter{\romannumeral2}]{hong2021heppcat}, the maximum-likelihood estimate of the underlying subspace $\bm{Q}$ (see \cite[Section 4.2.2]{gilman2022semidefinite}) is a solution to the following problem of maximizing a sum of heterogeneous quadratic functions with an orthogonality constraint (HQPOC) \cite{bolla1998extrema,wang2022maximizing}:
\begin{align} 
	\label{eq: hppca-mle}
	\max_{\bm{X} \in {\rm St}(d,K)} \left\{ \sum_{l=1}^L \tr\left( \bm{X}^\top \frac{1}{n v_l} \bm{Y}_l \bm{Y}_l^\top \bm{X} \diag\left([w_{l,1},\dots,w_{l,K}]^\top \right)  \right)  =    \sum_{k=1}^K {\bx_k}^\top \bA_k \bx_k \right\}.
\end{align}
Here, $\bX = [\bx_1,\dots,\bx_K] \in {\rm St}(d,K) \subseteq \Br^{d\times K}$, $w_{l,k} = \frac{\lambda_k}{\lambda_k + v_l} \in (0,1)$, $\bm{Y}_l = [\vvy_{l,1},\dots,\vvy_{l,n_l}]$, $\diag\left([w_{l,1},\dots,w_{l,K}]^\top \right)$ is a diagonal matrix with the entries of $[w_{l,1},\dots,w_{l,K}]^\top$ on its diagonal, and $\bA_k = \sum_{l=1}^L w_{l,k} \frac{1}{nv_l} \bm{Y}_{l} \bm{Y}_l^\top  \succeq \bm{0}$ (i.e., positive semidefiniteness). Contrary to the fact that PCA can be efficiently solved via SVD, the non-convex optimization problem \eqref{eq: hppca-mle} is challenging and cannot be solved by performing SVD due to the sum of heterogeneous quadratic objectives.

\subsection{Population-residual decomposition}
To analyze the non-convex optimization problem \eqref{eq: hppca-mle} with finite data samples from the underlying probabilistic model \eqref{eq: hppca_pm}, one typical approach (see, e.g., \cite[Section 2]{chen2019gradient}) is first to consider the case of infinite data samples (but with $L$ being a fixed constant). Specifically, we first consider another perhaps simpler optimization problem obtained by taking expectations over the observed data, which corresponds to the ``population part'' of the HPPCA problem. Then, viewing the discrepancy between the finite-sample and infinite-sample cases as the residual part readily yields the desired population-residual decomposition. Such a population-residual decomposition provides important insights for tackling the HPPCA problem.


\begin{lemma}[\textbf{Population-residual decomposition}]
	Denote $\bM_k = \frac{1}{n}\sum_{l=1}^{L} \sum_{i=1}^{n_l}  \frac{w_{l,k}}{v_l}{\vvy}_{l,i}\left({\vvy}_{l,i} \right)^\top - \gamma_k \bI_d$ with $\gamma_k = \sum_{l=1}^L w_{l,k}\frac{n_l}{n}$ (note $\gamma_1 > \gamma_2 > \dots>\gamma_K)$ and $\Delta_k = \bM_k - a_k \bm{Q} \bm{\Theta}^2 \bm{Q}^\top $. Then, under the HPPCA model \eqref{eq: hppca_pm}, the optimization problem \eqref{eq: hppca-mle} is equivalent to
	\begin{align}
		\max_{\bm{X} \in {\rm St}(d,K)}  f (\bm{X}) & = \sum_{k=1}^K \vvx_k^\top \bM_k \vvx_k \label{eq: proheppca}  \\
		& =  \underbrace{{\rm tr} \left(\bX^\top \bm{Q} \bm{\Theta}^2 \bm{Q}^\top \bX {\rm diag}(\ba) \right)}_{g(\bX): \textbf{ \scriptsize Population part}} + \underbrace{\sum_{k=1}^K \vvx_k^\top \Delta_k \vvx_k}_{h(\bX): \textbf{ \scriptsize Residual part}}.\label{eq: proheppcasnr}
	\end{align}
	Here, $\bQ \in {\rm St}(d,K)$ is the ground truth, $ \ba = (a_1,a_2,\dots,a_K)^\top$ with $a_k = \sum_{l=1}^L w_{l,k}\frac{n_l}{n} \frac{1}{v_l}$ (note $a_1 > a_2> \dots >a_K>0$), and ${\rm diag}(\ba)$ is a diagonal matrix with the entries of $\ba$ on its diagonal.
	\label{lemma: snd}
\end{lemma}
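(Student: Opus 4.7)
The plan is to establish the two equalities in the lemma by direct algebraic manipulation, with the key observation that the quantities $\gamma_k$ and $a_k$ in the statement have been chosen precisely so that subtracting $\gamma_k \bI_d$ from the empirical second-moment matrix gives a matrix whose expectation equals $a_k \bQ \bm{\Theta}^2 \bQ^\top$.

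First I would prove the equivalence between \eqref{eq: hppca-mle} and \eqref{eq: proheppca}. Expanding $\bY_l \bY_l^\top = \sum_{i=1}^{n_l} \vvy_{l,i} \vvy_{l,i}^\top$ in the definition of $\bA_k$, one obtains
\[
\bA_k \;=\; \frac{1}{n}\sum_{l=1}^L \sum_{i=1}^{n_l} \frac{w_{l,k}}{v_l} \vvy_{l,i} \vvy_{l,i}^\top \;=\; \bM_k + \gamma_k \bI_d.
\]
Hence $\sum_{k=1}^K \bx_k^\top \bA_k \bx_k = \sum_{k=1}^K \bx_k^\top \bM_k \bx_k + \sum_{k=1}^K \gamma_k \|\bx_k\|^2$. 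Since $\bX \in \St(d,K)$ forces $\|\bx_k\|^2 = 1$ for every $k$, the second term collapses to the constant $\sum_{k=1}^K \gamma_k$, so the two objectives differ by a constant on the feasible set and share the same maximizers.

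Next I would establish the population-residual split in \eqref{eq: proheppcasnr}. Under \eqref{eq: hppca_pm}, independence of $\vvz_{l,i}$ and $\bm{\eta}_{l,i}$ together with $\vvz_{l,i}\sim\NCal(\bm{0},\bI_K)$ and $\bm{\eta}_{l,i}\sim\NCal(\bm{0},v_l\bI_d)$ yields $\mathbb{E}[\vvy_{l,i}\vvy_{l,i}^\top] = \bQ \bm{\Theta}^2 \bQ^\top + v_l \bI_d$. Substituting into the definition of $\bM_k$ and collecting coefficients gives
\[
\mathbb{E}[\bM_k] \;=\; \Bigl(\sum_{l=1}^L \frac{n_l w_{l,k}}{n v_l}\Bigr) \bQ \bm{\Theta}^2 \bQ^\top \;+\; \Bigl(\sum_{l=1}^L \frac{n_l w_{l,k}}{n}\Bigr) \bI_d \;-\; \gamma_k \bI_d \;=\; a_k \bQ \bm{\Theta}^2 \bQ^\top,
\]
where the identity term vanishes by the choice of $\gamma_k$ and the first coefficient matches $a_k$ by definition. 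Thus $\Delta_k = \bM_k - \mathbb{E}[\bM_k]$ is the mean-zero fluctuation, which is exactly the definition given in the lemma.

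Plugging $\bM_k = a_k \bQ \bm{\Theta}^2 \bQ^\top + \Delta_k$ into $f(\bX)$ and using the identity $\sum_{k=1}^K a_k \bx_k^\top (\bQ \bm{\Theta}^2 \bQ^\top) \bx_k = \tr\bigl(\bX^\top \bQ \bm{\Theta}^2 \bQ^\top \bX \,\diag(\ba)\bigr)$ (which follows by reading off the diagonal entries of $\bX^\top \bQ \bm{\Theta}^2 \bQ^\top \bX \,\diag(\ba)$) delivers $f(\bX) = g(\bX) + h(\bX)$. The orderings $\gamma_1 > \cdots > \gamma_K$ and $a_1 > \cdots > a_K$ follow from $w_{l,1} > \cdots > w_{l,K}$, which in turn is a consequence of $w_{l,k} = \lambda_k/(\lambda_k+v_l)$ being increasing in $\lambda_k$ together with $\lambda_1 > \cdots > \lambda_K$. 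Since the entire argument is essentially bookkeeping, there is no real obstacle; the only thing to be careful about is to verify that the definitions of $\gamma_k$, $a_k$, and $\Delta_k$ are internally consistent and that the trace rewriting matches $g(\bX)$ exactly.
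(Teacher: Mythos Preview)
Your proposal is correct and follows essentially the same route as the paper: both compute $\EE[\vvy_{l,i}\vvy_{l,i}^\top]=\bQ\bm{\Theta}^2\bQ^\top+v_l\bI_d$, identify $\EE[\bA_k]=a_k\bQ\bm{\Theta}^2\bQ^\top+\gamma_k\bI_d$, observe that subtracting $\gamma_k\bI_d$ shifts the objective by a constant on $\St(d,K)$, and then split $\bM_k$ into its mean $a_k\bQ\bm{\Theta}^2\bQ^\top$ plus the fluctuation $\Delta_k$. The only cosmetic difference is that the paper introduces rescaled samples $\tilde{\vvy}^k_{l,i}=\sqrt{w_{l,k}/v_l}\,\vvy_{l,i}$ before taking expectations, whereas you work directly with $\vvy_{l,i}$; the content is identical.
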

\begin{proof}\,
	Let $\tilde{\vvy}^k_{l,i} = \sqrt{\frac{w_{l,k}}{v_l}} \vvy_{l,i}$. From the fact ${\vvy}_{l,i} \overset{ \rm iid}{\sim} \NCal \left(\bm{0}, \bm{Q} \bm{\Theta}^2 \bm{Q}^\top + v_l \bI_d \right)$, we obtain
	\begin{align} \label{eq: tilde_y^i-distribution}
		\tilde{\vvy}^k_{l,i} \overset{ \rm iid}{\sim} \NCal \left(\bm{0}, w_{l,k}\left(\frac{1}{v_l}\bm{Q} \bm{\Theta}^2 \bm{Q}^\top + \bI_d \right) \right), \;
		\bA_k = \frac{1}{n}\sum_{l=1}^{L} \sum_{i=1}^{n_l} \tilde{\vvy}^k_{l,i}\left(\tilde{\vvy}^k_{l,i}\right)^\top.
	\end{align}
	Taking expectations over data samples yields
	\begin{align*}
		\EE[\bA_k] = \sum_{l=1}^L w_{l,k} \frac{n_l}{n} \left(\frac{1}{v_l} \bm{Q} \bm{\Theta}^2 \bm{Q}^\top + \bI_d \right) = a_k \bm{Q} \bm{\Theta}^2 \bm{Q}^\top + \gamma_k \bI_d, \; k \in [K],
	\end{align*}
	where $a_k = \sum_{l=1}^L w_{l,k}\frac{n_l}{n} \frac{1}{v_l}$ and $\gamma_k =  \sum_{l=1}^L w_{l,k}\frac{n_l}{n}$. Here, we assume that the proportion $\frac{n_l}{n}, \forall l \in [L]$ remains fixed when $n$ increases. Due to the fact that $w_{l,1} > w_{l,2} > \dots > w_{l,K}$ for each fixed $l \in [L]$, we know $a_1 > a_2> \dots >a_K$ and $\gamma_1 > \gamma_2 > \dots>\gamma_K$.
	Let $\bar{\bm{A}}_{k} = \EE[\bA_k]-\gamma_k \bI_d$, $\bM_k = \bA_k - \gamma_k \bI_d$, and
	\begin{align}\label{eq: noise-formulation}
		\Delta_k = \bA_k -  \EE[\bA_k] = \frac{1}{n}\sum_{l=1}^{L} \sum_{i=1}^{n_l} \tilde{\vvy}^k_{l,i}\left(\tilde{\vvy}^k_{l,i}\right)^\top - \left( a_k \bm{Q} \bm{\Theta}^2 \bm{Q}^\top + \gamma_k \bI_d \right).
	\end{align}
	Problem \eqref{eq: hppca-mle} can be equivalently written as
	\begin{align}
		\max_{\bm{X} \in {\rm St}(d,K)}  f (\bm{X}) & = \sum_{k=1}^K \vvx_k^\top \bM_k \vvx_k =\sum_{k=1}^K \vvx_k^\top \left(\bar{\bA}_{k} + \Delta_k \right) \vvx_k \label{eq: proheppca11}  \\
		& = \sum_{k=1}^K \vvx_k^\top \bm{Q} \bm{\Theta}^2 \bm{Q}^\top \vvx_k \cdot a_k + \sum_{k=1}^K \vvx_k^\top \Delta_k \vvx_k  \nonumber \\
		& =  \underbrace{{\rm tr} \left(\bX^\top \bm{Q} \bm{\Theta}^2 \bm{Q}^\top \bX {\rm diag}(\ba) \right)}_{g(\bX)} + \underbrace{\sum_{k=1}^K \vvx_k^\top \Delta_k \vvx_k}_{h(\bX)}. \label{eq: proheppcasnr11}
	\end{align}
	This gives the desired population-residual decomposition.
\end{proof}

Since the underlying subspace $\bm{Q}$ is unknown, the problem formulation \eqref{eq: proheppcasnr} will only be used in the theoretical analysis. We can view the HPPCA problem \eqref{eq: proheppcasnr} as a perturbed version of the population part $g(\bX)$ with perturbation levels depending on $\{\Delta_k \}_{k=1}^K$.
As shown in \Cref{lemma:ineq} and \Cref{lemma:ineq-1} below, the operator norms of the residual terms $\{ \Delta_k \}_{k=1}^K$ can be made arbitrarily small by increasing the number of data samples $n$.

\begin{lemma}(\cite[Lemma I.2]{gilman2022semidefinite})
	Let $\boldsymbol{y}_{1}, \ldots, \boldsymbol{y}_{n} \subseteq \mathbb{R}^{d}$ be i.i.d. centered Gaussian random variables with covariance operator $\boldsymbol{\Sigma}$ and sample covariance $\hat{\boldsymbol{\Sigma}}=\frac{1}{n} \sum_{i=1}^{n} \boldsymbol{y}_{i} \boldsymbol{y}_{i}^{\top}$. Then, there exists some constant $c_1>0$ such that with probability at least $1-e^{-t}$ for $t>0$,
	\begin{equation*}
		\|\hat{\bm{\Sigma}}- \bm{\Sigma}\| \leq c_1\| \bm{\Sigma}\| \max \left\{\sqrt{\frac{\tilde{r}(\bm{\Sigma}) \log d+t}{n}}, \frac{\left(\tilde{r}( \bm{\Sigma} ) \log d+t \right) \log n}{n}\right\},
	\end{equation*}
	where $\| \cdot \|$ represents the operator norm and $\tilde{r}(\bm{\Sigma}):=\tr(\bm{\Sigma}) /\|\bm{\Sigma}\|$.
	\label{lemma:ineq}
\end{lemma}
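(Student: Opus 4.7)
The result is a Bernstein-style concentration inequality for the sample covariance of centered Gaussians, with the sharp dependence on the effective rank $\tilde{r}(\boldsymbol{\Sigma})$ rather than on the ambient dimension $d$. My plan is to view $\hat{\boldsymbol{\Sigma}} - \boldsymbol{\Sigma} = \frac{1}{n}\sum_{i=1}^n (y_i y_i^\top - \boldsymbol{\Sigma})$ as an i.i.d.\ sum of centered self-adjoint random matrices with sub-exponential operator norm, and then to feed it into a matrix Bernstein inequality with intrinsic dimension.

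First I would rescale by $\|\boldsymbol{\Sigma}\|$ and reduce to the case $\|\boldsymbol{\Sigma}\| = 1$, so that the effective rank becomes $r := \tr(\boldsymbol{\Sigma})$ and the target bound reads $\|\hat{\boldsymbol{\Sigma}} - \boldsymbol{\Sigma}\| \leq c_1 \max\{\sqrt{(r \log d + t)/n},\, (r \log d + t)\log n/n\}$. Writing $y_i = \boldsymbol{\Sigma}^{1/2} g_i$ with $g_i \sim \mathcal{N}(0, I_d)$, each summand $X_i := y_i y_i^\top - \boldsymbol{\Sigma}$ is centered and sub-exponential with Orlicz norm $\|X_i\|_{\psi_1} \lesssim 1$, by standard concentration for Gaussian quadratic forms (Hanson--Wright). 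A short Gaussian moment computation (Isserlis) gives the matrix-variance bound
\[
\bigl\|\mathbb{E}[X_i^2]\bigr\| \;=\; \bigl\|\mathbb{E}[\|y_i\|^2 y_i y_i^\top] - \boldsymbol{\Sigma}^2\bigr\| \;\lesssim\; \tr(\boldsymbol{\Sigma})\,\|\boldsymbol{\Sigma}\| \;=\; r,
\]
which is precisely the ``variance proxy'' that will appear in the Bernstein exponent.

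I would then truncate each $X_i$ at a level of order $r \log n$ (a high-probability bound on $\|X_i\|$ obtained by a union bound on the sub-exponential tails) so as to convert the heavy-tailed summands into essentially bounded ones, and apply the intrinsic-dimension version of matrix Bernstein (e.g.\ Tropp, Thm.~7.3.1). This produces two regimes: a Gaussian regime $\sqrt{r(\log d + t)/n}$ governed by the variance term, and a sub-exponential regime $r(\log d + t)\log n / n$ governed by the truncation level. The pre-factor $\log d$ (rather than $d$) arises from the intrinsic-dimension refinement, which replaces the ambient-dimension factor by the effective rank; inverting the tail at confidence $1 - e^{-t}$ yields the two terms inside the maximum and hence the stated inequality.

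The main obstacle is exactly this intrinsic-dimension step: a naive $\varepsilon$-net argument over the unit sphere of $\mathbb{R}^d$ would pay an ambient-dimension factor $d$ in place of $\tilde{r}(\boldsymbol{\Sigma})\log d$, which is far too coarse when $\boldsymbol{\Sigma}$ is approximately low rank. Recovering the effective rank requires either Tropp's intrinsic-dimension matrix Bernstein bound or a generic-chaining argument in the spirit of Koltchinskii--Lounici. The secondary subtlety is aligning the truncation level with the sub-exponential tail of $\|y_i\|^2$ so that the $\log n$ factor appears only in the sub-exponential regime and does not leak an extra polylogarithmic factor into the Gaussian regime.
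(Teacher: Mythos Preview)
The paper does not supply a proof of this lemma; it is quoted verbatim from \cite[Lemma~I.2]{gilman2022semidefinite} and used as an off-the-shelf concentration tool, so there is no in-paper argument to compare your proposal against. Your sketch---truncate the sub-exponential summands $y_iy_i^\top-\boldsymbol{\Sigma}$ at level $O(\tilde r(\boldsymbol{\Sigma})\log n)$, compute the matrix variance $\|\mathbb{E}[X_i^2]\|\lesssim \tr(\boldsymbol{\Sigma})\|\boldsymbol{\Sigma}\|$ via Isserlis, and feed both into a matrix Bernstein inequality---is a standard and sound route to exactly this kind of bound (the Koltchinskii--Lounici chaining argument you also mention is the other canonical proof). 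One minor correction to your discussion: the $\log d$ factor already comes from the \emph{ordinary} matrix Bernstein inequality, whose dimensional dependence is logarithmic; what your variance computation buys is the \emph{multiplicative} appearance of the effective rank $\tilde r(\boldsymbol{\Sigma})$ in place of $d$, not the $\log d$. Tropp's intrinsic-dimension refinement would in fact sharpen $\log d$ to $\log \tilde r(\boldsymbol{\Sigma})$, which is stronger than what the lemma asserts and hence not needed here.
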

\begin{lemma}(\cite[Lemma C.4]{gilman2022semidefinite})
	Let $c_2>0$ be a universal constant and $\bm{C}_l = \frac{1}{n}\sum_{i=1}^{n_l} \bm{y}_{l,i} (\bm{y}_{l,i})^\top$. Then, with probability at least $1-e^{-t}$ for $t>0$,
	\begin{equation*}
		\left\|\bm{C}_l-\mathbb{E}\left[\bm{C}_l\right]\right\| \leq c_2 \left\|\mathbb{E}\left[\bm{C}_l\right]\right\| \max \left\{\sqrt{\frac{\frac{\bar{\xi}_{l}}{\bar{\sigma}_{l}} \log d+t}{n}}, \frac{\frac{\bar{\xi}_{l}}{\bar{\sigma}_{l}} \log d+t}{n} \log n\right\},
	\end{equation*}
	where $\bar{\sigma}_{l}=\left\|\mathbb{E}\left[\bm{C}_l\right]\right\|=\frac{n_{\ell}}{n}\left(\lambda_{1}+v_{\ell}\right) \;$ and $\; \bar{\xi}_{l}=\tr\left(\mathbb{E}\left[\bm{C}_l\right]\right)= \frac{n_{\ell}}{n}\left( \sum_{k=1}^{K} \lambda_{k}+v_{\ell}d\right).$
	\label{lemma:ineq-1}
\end{lemma}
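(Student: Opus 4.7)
The plan is to reduce the claim to \Cref{lemma:ineq} (the i.i.d.\ covariance concentration bound) applied to the group-$l$ samples, and then account for the non-standard normalization $\tfrac{1}{n}$ (rather than $\tfrac{1}{n_l}$) in the definition of $\bm{C}_l$ by rescaling. Concretely, introduce the standard sample covariance $\hat{\bm{\Sigma}}_l := \tfrac{1}{n_l}\sum_{i=1}^{n_l}\bm{y}_{l,i}\bm{y}_{l,i}^\top$, so that $\bm{C}_l = \tfrac{n_l}{n}\hat{\bm{\Sigma}}_l$ and $\mathbb{E}[\bm{C}_l] = \tfrac{n_l}{n}\bm{\Sigma}_l$, where $\bm{\Sigma}_l := \bm{Q}\bm{\Theta}^2\bm{Q}^\top + v_l\bm{I}_d$ is the population covariance of each $\bm{y}_{l,i}$ under model \eqref{eq: hppca_pm}. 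The spectrum of $\bm{\Sigma}_l$ is $\{\lambda_k + v_l\}_{k=1}^K$ together with $v_l$ of multiplicity $d-K$, so $\|\bm{\Sigma}_l\| = \lambda_1 + v_l$ and $\tr(\bm{\Sigma}_l) = \sum_{k=1}^K \lambda_k + v_l d$. Multiplying by $\tfrac{n_l}{n}$ recovers the stated $\bar{\sigma}_l = \|\mathbb{E}[\bm{C}_l]\|$ and $\bar{\xi}_l = \tr(\mathbb{E}[\bm{C}_l])$.

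Next, I would invoke \Cref{lemma:ineq} with $\bm{\Sigma} \leftarrow \bm{\Sigma}_l$ and sample size $n_l$ to obtain, with probability at least $1 - e^{-t}$,
\begin{equation*}
\|\hat{\bm{\Sigma}}_l - \bm{\Sigma}_l\| \leq c_1\, \|\bm{\Sigma}_l\|\,\max\!\left\{\sqrt{\tfrac{\tilde{r}(\bm{\Sigma}_l)\log d + t}{n_l}},\; \tfrac{(\tilde{r}(\bm{\Sigma}_l)\log d + t)\log n_l}{n_l}\right\}.
\end{equation*}
Since $\bm{C}_l - \mathbb{E}[\bm{C}_l] = \tfrac{n_l}{n}(\hat{\bm{\Sigma}}_l - \bm{\Sigma}_l)$, scaling both sides by $\tfrac{n_l}{n}$ and using $\tfrac{n_l}{n}\|\bm{\Sigma}_l\| = \bar{\sigma}_l$ yields the same inequality with $\|\bm{\Sigma}_l\|$ replaced by $\bar{\sigma}_l$ and with the left-hand side being $\|\bm{C}_l - \mathbb{E}[\bm{C}_l]\|$. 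Because the effective rank $\tilde{r}(\cdot) = \tr(\cdot)/\|\cdot\|$ is scale-invariant, $\tilde{r}(\bm{\Sigma}_l) = \tilde{r}(\mathbb{E}[\bm{C}_l]) = \bar{\xi}_l/\bar{\sigma}_l$, which matches the form appearing in the target bound.

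The only remaining discrepancy---and the step I expect to be the main obstacle---is the denominator: my derivation produces $n_l$ where the stated lemma has $n$. Since $n_l \leq n$, a naive replacement inflates the right-hand side by at most $\sqrt{n/n_l}$ in the square-root term and $n/n_l$ in the Bernstein term. Under the standing assumption introduced in \Cref{lemma: snd} that the proportions $n_l/n$ are fixed as $n$ grows, this loss can be absorbed into a universal constant $c_2 \ge c_1 \big/\sqrt{\min_l n_l/n}$ that is independent of $n$. Alternatively, one may simply leave $n_l$ in the denominator: since $n_l \asymp n$ in the fixed-proportion regime, the qualitative consequence needed for \eqref{eq: noise-formulation}---namely that $\|\Delta_k\|$ vanishes as $n \to \infty$---is unchanged. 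In either reading, all probabilistic content is inherited from \Cref{lemma:ineq}, and the present lemma is essentially its reweighted corollary.
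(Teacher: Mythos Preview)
The paper does not prove this lemma at all; it is simply quoted from \cite[Lemma C.4]{gilman2022semidefinite} with no argument supplied. Your reduction to \Cref{lemma:ineq} via the rescaling $\bm{C}_l=\tfrac{n_l}{n}\hat{\bm{\Sigma}}_l$ is exactly the natural derivation one would expect that reference to contain, and the computation of $\bar{\sigma}_l$, $\bar{\xi}_l$, and the scale-invariance of $\tilde r(\cdot)$ are all handled correctly.

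One minor slip in your $n_l$-versus-$n$ discussion: to pass from your derived bound (denominator $n_l$) to the stated one (denominator $n$), the Bernstein-type term $\tfrac{(\cdot)\log n_l}{n_l}$ must absorb a factor $n/n_l$ (together with the harmless $\log n_l/\log n\le 1$), not merely $\sqrt{n/n_l}$. Hence the constant you need is $c_2\ge c_1\cdot\max_l(n/n_l)$, not $c_1/\sqrt{\min_l n_l/n}$ as you wrote; under the fixed-proportion regime this is still a finite constant independent of $n$, so the conclusion stands. Your alternative of simply leaving $n_l$ in the denominator is also perfectly adequate for the only downstream use (showing $\|\Delta_k\|\to 0$).
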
 
By \eqref{eq: noise-formulation}, $\{\Delta_k \}_{k=1}^K$ can be expressed as $\Delta_k = \sum_{l=1}^L \frac{w_{l,k}}{v_l} \left(\bm{C}_l-\mathbb{E}\left[\bm{C}_l\right]\right), \forall k \in [K]$. This, together with \Cref{lemma:ineq-1}, concludes that the operator norms of $\{\Delta_k \}_{k=1}^K$ can be arbitrarily small with growing $n$.

\section{Generalized Power Method}
We now present the lightweight generalized power method (GPM) \cite{journee2010generalized} for the HPPCA problem \eqref{eq: proheppca} in \Cref{alg:PGD_al}. An important step therein is to compute $\PCal_{\rm St}(\bX)$, which represents the projection of $\bm{X} \in \mathbb{R}^{d \times K}$ onto the non-convex manifold ${\rm St}(d,K)$, and it can be efficiently obtained by $\bU \bV^\top \in \PCal_{\rm St}(\bX)$ if $\bX$ admits the thin SVD $\bX = \bU {\bm \Sigma} \bV^\top$; see \cite{S66}. Due to the fact $\PCal_{\rm St}\left( \alpha \bm{X}^t +\left[\bM_1 \vvx_1^t, \dots, \bM_K \vvx_K^t \right]\right) = 	\PCal_{\rm St}\left(  \bm{X}^t  + \frac{1}{2\alpha} \nabla f(\bm{X}^t)  \right)$, GPM can be intuitively viewed as an instance of the non-convex \emph{projected gradient ascent} method with a constant stepsize $\frac{1}{2\alpha} >0 $. Besides, GPM closely resembles the classic power method for computing the dominant eigenvector of a matrix. In fact, if we let $\alpha=0$, then line 4 of \Cref{alg:PGD_al} consists of two steps: One is to compute the gradient and the other is to perform projection. These two steps are generalizations of the power iteration. It is worth mentioning that GPM has achieved significant success in the emerging \emph{provable} non-convex optimization area including group synchronization \cite{boumal2016nonconvex,zhong2018near,chen2018projected,ling2022improved,liu2023unified,zhu2021orthogonal}, community detection \cite{wang2021optimal,wang2023ptpm}, and graph matching \cite{onaran2017projected,araya2022seeded}.


\begin{algorithm}[!htbp]
	\caption{Generalized Power Method (GPM) for Solving Problem \eqref{eq: proheppca}}  
	\begin{algorithmic}[1]
		\State \textbf{Input:} Matrices $\bM_1,\dots,\bM_K$, the step size $\alpha > 0$
		\State \textbf{Initialize} an initial point $\bX^0 \in {\rm St}(d,K)$ satisfying certain conditions (see the condition (a) in \Cref{thm: est_performace})
		\For{$t=0,1,2,\dots,$}
		\State set $\bX^{t+1} \in \PCal_{\rm St}\left( \alpha \bm{X}^t +\left[\bM_1 \vvx_1^t, \dots, \bM_K \vvx_K^t \right]\right)$
		\EndFor
	\end{algorithmic}
	\label{alg:PGD_al}
\end{algorithm}

Due to the intrinsic non-convexity in the HPPCA problem, \Cref{alg:PGD_al} 
may not be effective for solving it unless a carefully designed initial point $\bX^0$ is
available. In the following, we describe two initialization schemes.
\paragraph{Initialization by PCA}
We can use the classic (homoscedastic) PCA to obtain a good initialization; see \cite[Section 3.2]{hong2019probabilistic}. Specifically, the starting point $\bX^0$ in \Cref{alg:PGD_al} is given by the $K$ principal eigenvectors of the sample covariance matrix $ \bm{C} = \frac{1}{n} \sum_{l=1}^L \sum_{i=1}^{n_l} \bm{y}_{l,i} (\bm{y}_{l,i})^\top$. The intuition behind this approach is that when taking expectation over the data samples, we obtain
\begin{align*}
	\mathbb{E}[\bm{C}] = \sum_{l=1}^L \frac{n_l}{n} \left(\bm{Q} \bm{\Theta}^2 \bm{Q}^\top + v_l \bI_d \right) = \bm{Q} \bm{\Theta}^2 \bm{Q}^\top + \sum_{l=1}^L \frac{n_l}{n} v_l \bI_d,
\end{align*}
\vspace{-0.2cm}
and $\bQ$ consists of $K$ principal eigenvectors of $\mathbb{E}[\bm{C}]$. 

\paragraph{Random initialization} Another way is to employ a random initialization directly. That is, we choose an initialization $\bX^0$ uniformly at random from the Stiefel manifold $\St(d,K)$. Empirically, such a random initialization works well.


\section{Theoretical Analysis and Main Results}
In this section, we study the estimation performance (\Cref{thm: est_performace}) of \Cref{alg:PGD_al} for the HPPCA problem. Specifically, we show that the iterates generated by \Cref{alg:PGD_al} approach the ground truth $\bQ$ at a geometric rate up to a certain threshold, which depends on the residual part in \eqref{eq: proheppcasnr}. Before we proceed, let us give an outline of our theoretical development.

Since the HPPCA problem \eqref{eq: proheppca} admits the population-residual decomposition \eqref{eq: proheppcasnr}, it can be seen as a slightly perturbed problem of the population part, which is a quadratic program with orthogonal constraints (QPOC),
\begin{equation}\label{P:min g} \tag{QPOC}
	\max_{\bm{X} \in {\rm St}(d,K)} g(\bX) = {\rm tr} \left(\bX^\top \bm{Q} \bm{\Theta}^2 \bm{Q}^\top\bX {\rm diag}(\ba) \right).
\end{equation}
Here, $\bm{Q},\bm{\Theta}$, and ${\rm diag}(\ba)$ are defined in \eqref{eq: hppca_pm} and \Cref{lemma: snd}, respectively. We first analyze problem \eqref{P:min g} and characterize its critical points in Lemma \ref{lem: 1stoptcon} as well as its (local) quadratic growth property in Lemma \ref{lemma: QG}. After that, we consider the following tailored GPM update with a stepsize $\alpha >0$ for \eqref{P:min g}:
\begin{align} \tag{GPM-QPOC}
	\bX^{t+1} & \in  \PCal_{\rm St}\left(\alpha \bm{X}^{t}+ \bm{Q} \bm{\Theta}^2 \bm{Q}^\top \bX^t\diag(\ba) \right) =  \PCal_{\rm St}\left(\mathcal{A}_{\alpha} (\bm{X}^{t})\right). \label{update:gpm-qpoc}
\end{align}
Here, $\mathcal{A}_{\alpha} (\bm{X}) = \alpha \bm{X}+ \bm{Q} \bm{\Theta}^2 \bm{Q}^\top \bX\diag(\ba)$, $\bX^0$ is given satisfying certain conditions, and $t = 0,1,\dots$ is the iteration number. We show in Theorem \ref{thm:LC for PGD} that the above iterative method enjoys a linear convergence rate. To prove Theorem \ref{thm:LC for PGD}, we derive a new and instrumental local error bound result for problem \eqref{P:min g} in Theorem \ref{thm: gpmeb}, which complements existing local error bound results in \cite{liu2019quadratic,wang2021linear,zhu2021orthogonal,chen2021local} and can be of independent interest. Lastly, the estimation performance guarantee of Algorithm \ref{alg:PGD_al} can be established by taking into account the discrepancy between problem \eqref{eq: proheppca} and problem \eqref{P:min g}.

\subsection{Basic properties of QPOC}

With the population-residual decomposition \eqref{eq: proheppcasnr} at hand, we provide several useful properties of problem \eqref{P:min g}. First, we characterize the set of first-order critical points of it using tools from smooth manifold optimization \cite{absil2009optimization,hu2020brief,boumal2023introduction}. We view the feasible set $\St(d,K)$ as an embedded submanifold of $\mathbb{R}^{d \times K}$ with the Euclidean inner product $\langle \cdot, \cdot \rangle$ as the Riemannian metric. The following results about critical points are similar to those in \cite[Section 4.8.2]{absil2009optimization} and \cite[Proposition 3]{liu2019quadratic}.

\begin{lemma}[\textbf{Set of critical points of QPOC}] \label{lem: 1stoptcon}
	$\bX \in \St(d,K)$ is a critical point of \eqref{P:min g} if and only if 
	\begin{align}
		\bm{Q} \bm{\Theta}^2 \bm{Q}^\top \bX\diag(\ba) = \bX \bm{S}
		\label{eq:first_order_symm}
	\end{align}
	for some symmetric matrix $\bm{S}$. Moreover, the columns of $\bX$ are the eigenvectors of $\bm{Q} \bm{\Theta}^2 \bm{Q}^\top$. That is, the set of critical points takes the form 
	\begin{equation}
		\bX = \bar{\bm{Q}} \bPi \diag(\bq),
		\label{eq:critical_eig}
	\end{equation}
	where $\bPi \in \br^{d \times K}$ has exactly one non-zero entry being $1$ in each column and further satisfies $\bPi^\top \bPi = \bI_K$, $\bq \in \{1,-1\}^K$, and $\bar{\bm{Q}} = [\bm{Q},\bm{Q}_{\perp}] \in \br^{d \times d}$ forms an orthogonal matrix.
\end{lemma}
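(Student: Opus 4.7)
The plan is to use standard Riemannian optimization on the Stiefel manifold to characterize critical points, and then exploit the distinctness of the entries of $\ba$ together with the eigenstructure of $\bm{Q}\bm{\Theta}^2\bm{Q}^\top$ to pin down the form of $\bX$.

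First I would recall that for the embedded submanifold $\St(d,K)$ with the Euclidean metric, the tangent space at $\bX$ is $T_{\bX}\St(d,K)=\{\bm{U}\in\Br^{d\times K}:\bX^\top\bm{U}+\bm{U}^\top\bX=0\}$ and the orthogonal projection of any $\bm{Z}\in\Br^{d\times K}$ onto this tangent space is $\bm{Z}-\bX\cdot\sym(\bX^\top\bm{Z})$. The Euclidean gradient of $g$ is $\nabla g(\bX)=2\,\bm{Q}\bm{\Theta}^2\bm{Q}^\top\bX\,\diag(\ba)$, and $\bX$ is a critical point iff the Riemannian gradient $\nabla g(\bX)-\bX\,\sym(\bX^\top\nabla g(\bX))$ vanishes. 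Rearranging gives $\bm{Q}\bm{\Theta}^2\bm{Q}^\top\bX\diag(\ba)=\bX\bm{S}$ for the symmetric matrix $\bm{S}=\sym(\bX^\top\bm{Q}\bm{\Theta}^2\bm{Q}^\top\bX\diag(\ba))$, establishing the "only if" part of \eqref{eq:first_order_symm}. The "if" part is immediate: multiplying a relation of the form $\nabla g(\bX)/2=\bX\bm{S}$ with $\bm{S}$ symmetric by the projector yields zero.

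Next, to extract \eqref{eq:critical_eig}, I would left-multiply \eqref{eq:first_order_symm} by $\bX^\top$ and use $\bX^\top\bX=\bI_K$ to obtain $\bm{P}\diag(\ba)=\bm{S}$, where $\bm{P}:=\bX^\top\bm{Q}\bm{\Theta}^2\bm{Q}^\top\bX$ is symmetric positive semidefinite. Since $\bm{S}$ is also symmetric, componentwise the identity $\bm{P}_{ij}a_j=\bm{P}_{ji}a_i=\bm{P}_{ij}a_i$ must hold, and the assumption $a_1>a_2>\cdots>a_K>0$ from \Cref{lemma: snd} forces $\bm{P}_{ij}=0$ for $i\neq j$. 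Hence $\bm{P}$ and $\bm{S}=\bm{P}\diag(\ba)$ are diagonal, and plugging back into \eqref{eq:first_order_symm} column-by-column gives $\bm{Q}\bm{\Theta}^2\bm{Q}^\top\vvx_k=\bm{P}_{kk}\vvx_k$, so each $\vvx_k$ is a unit eigenvector of $\bm{Q}\bm{\Theta}^2\bm{Q}^\top$.

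Finally, since $\bm{Q}\bm{\Theta}^2\bm{Q}^\top$ has eigenvalues $\lambda_1,\dots,\lambda_K$ with eigenvectors given by the columns of $\bm{Q}$ and the remaining eigenvalue $0$ with eigenspace $\Span(\bm{Q})^{\perp}$, each $\vvx_k$ is either $\pm$ a column of $\bm{Q}$ or a unit vector in $\Span(\bm{Q})^{\perp}$; the mutual orthogonality forced by $\bX^\top\bX=\bI_K$ ensures the chosen columns/directions are distinct and mutually orthogonal. The main subtlety here is the degenerate zero eigenvalue: the columns of $\bX$ lying in $\Span(\bm{Q})^{\perp}$ can, a priori, be any orthonormal set there, but we can always complete them to an orthonormal basis $\bm{Q}_\perp$ of $\Span(\bm{Q})^{\perp}$, yielding an orthogonal $\bar{\bm{Q}}=[\bm{Q},\bm{Q}_\perp]$. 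With this choice, every column of $\bX$ is $\pm$ a column of $\bar{\bm{Q}}$, so $\bX$ takes the form $\bar{\bm{Q}}\bPi\diag(\bq)$ with $\bPi$ a signed column-selection matrix satisfying $\bPi^\top\bPi=\bI_K$ and $\bq\in\{1,-1\}^K$, as claimed.
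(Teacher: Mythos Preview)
Your proof is correct and follows essentially the same route as the paper: compute the Riemannian gradient on $\St(d,K)$, set it to zero to obtain \eqref{eq:first_order_symm}, and then use the distinctness of the entries of $\ba$ to force $\bX^\top\bm{Q}\bm{\Theta}^2\bm{Q}^\top\bX$ to be diagonal, which makes the columns of $\bX$ eigenvectors of $\bm{Q}\bm{\Theta}^2\bm{Q}^\top$. The only cosmetic difference is that the paper splits the Riemannian gradient into its $\Span(\bX)$ and $\Span(\bX)^\perp$ components and sets each to zero separately, whereas you use the compact projection formula $\nabla g(\bX)-\bX\,\sym(\bX^\top\nabla g(\bX))$ and then left-multiply by $\bX^\top$; the two arguments are equivalent.
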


\begin{proof}\, We first prove \eqref{eq:first_order_symm}.
	The Riemannian gradient of $g$ on the Stiefel manifold admits the expression
	\begin{equation*}
		\begin{aligned}
			\operatorname{grad} g(\bX) =2\left(\bI_d-\bX \bX^\top \right) \bm{Q} \bm{\Theta}^2 \bm{Q}^\top \bX  {\rm diag}(\ba)+ \bX \left(\bX^\top \bm{Q} \bm{\Theta}^2 \bm{Q}^\top \bX  {\rm diag}(\ba) -  {\rm diag}(\ba) \bX^\top \bm{Q} \bm{\Theta}^2 \bm{Q}^\top  \bX             \right).
		\end{aligned}
	\end{equation*}
	Since the columns of the first term in the expression of $\operatorname{grad} g(\bX)$ belong to the orthogonal complement of $\operatorname{span}(\bX)$ and the columns of the second term belong to $\operatorname{span}(\bX)$, it follows that $\operatorname{grad} g(\bX)$ vanishes if and only if
	\begin{equation}
		\left(\bI_d-\bX \bX^\top \right) \bm{Q} \bm{\Theta}^2 \bm{Q}^\top \bX  {\rm diag}(\ba)= \bm{0} 
		\label{eq:critical_lemma_1}
	\end{equation}
	and
	\begin{equation}
		\bX^\top \bm{Q} \bm{\Theta}^2 \bm{Q}^\top \bX {\rm diag}(\ba) -{\rm diag}(\ba) \bX^\top \bm{Q} \bm{\Theta}^2 \bm{Q}^\top  \bX  = \bm{0}.
		\label{eq:critical_lemma_2}
	\end{equation}
	Equation \eqref{eq:critical_lemma_1} yields
	$\left(\bI_d-\bX \bX^\top \right) \bm{Q} \bm{\Theta}^2 \bm{Q}^\top \bX = \bm{0}$,
	which further implies
	\begin{equation}
		\bm{Q} \bm{\Theta}^2 \bm{Q}^\top \bX=\bX \bm{M} 
		\label{eq:critical_lemma_3}
	\end{equation}
	for a symmetric matrix $\bm{M} = \bX^\top \bm{Q} \bm{\Theta}^2 \bm{Q}^\top \bX$. Then, \eqref{eq:first_order_symm} can be proved by setting $\bm{S} = \bm{M}{\rm diag}(\ba)$, which is symmetric due to \eqref{eq:critical_lemma_2}. Showing the converse direction (i.e., to show \eqref{eq:critical_lemma_1} and \eqref{eq:critical_lemma_2} using \eqref{eq:first_order_symm}) is straightforward.
	
	To prove \eqref{eq:critical_eig}, we observe that \eqref{eq:critical_lemma_2} implies $\bX^\top \bm{Q} \bm{\Theta}^2 \bm{Q}^\top \bX$ is diagonal. This further implies that $\bm{M}$ in \eqref{eq:critical_lemma_3} is diagonal. Hence, the columns of $\bX$ are eigenvectors of $\bm{Q} \bm{\Theta}^2 \bm{Q}^\top$ from \eqref{eq:critical_lemma_3}, which admit the expression in \eqref{eq:critical_eig}. Showing conversely that every such $\bX$ is a critical point of $g$ is straightforward (i.e., to show that \eqref{eq:critical_lemma_1} and \eqref{eq:critical_lemma_2} hold).
\end{proof}

Despite the existence of a collection of critical points (possibly infinite number of critical points due to $\bm{Q}_{\perp}$), the set of optimal solutions to \eqref{P:min g} admits a simple expression; see the result below.
\begin{lemma}[\textbf{Optimal solutions of QPOC}]
	The set of optimal solutions to \eqref{P:min g} is $\{\bm{Q} \diag(\bq) \}$ with $\bq \in \{1,-1\}^K$. Moreover, the distance between every optimal solution and any other critical points is no less than $\sqrt{2}$.
\end{lemma}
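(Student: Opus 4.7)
The plan is to exploit the explicit parametrization of critical points from \Cref{lem: 1stoptcon}, namely $\bX = \bar{\bm{Q}} \bPi \diag(\bq)$, and evaluate $g$ on this family. Writing $\bm{Q} \bm{\Theta}^2 \bm{Q}^\top = \bar{\bm{Q}} \bar{\bm{\Theta}}^2 \bar{\bm{Q}}^\top$ with $\bar{\bm{\Theta}}^2 = \diag([\lambda_1,\dots,\lambda_K,0,\dots,0]^\top) \in \br^{d\times d}$, and using $\bar{\bm{Q}}^\top \bar{\bm{Q}} = \bI_d$, the cyclic property of the trace, and $\diag(\bq)^2 = \bI_K$, the objective should collapse to
\begin{equation*}
g(\bX) = \tr\!\left(\bPi^\top \bar{\bm{\Theta}}^2 \bPi \diag(\ba)\right) = \sum_{k=1}^K a_k\, \lambda_{i_k}\, \indc\{i_k \le K\},
\end{equation*}
where $i_k \in \{1,\dots,d\}$ is the row of the unique nonzero entry of the $k$-th column of $\bPi$, and the indices $i_1,\dots,i_K$ are pairwise distinct (because $\bPi^\top\bPi = \bI_K$). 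Notably the signs $\bq$ drop out entirely.

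The next step is to maximize this combinatorial expression. Distinctness of the $i_k$'s together with the fact that any $i_k > K$ contributes $0$ force every maximizer to use each element of $\{1,\dots,K\}$ exactly once, so $(i_1,\dots,i_K)$ must be a permutation of $\{1,\dots,K\}$. Applying the strict rearrangement inequality to the strictly decreasing sequences $a_1 > \dots > a_K > 0$ and $\lambda_1 > \dots > \lambda_K > 0$ then pins this permutation uniquely to the identity $i_k = k$, giving $\bPi = \begin{bmatrix}\bI_K \\ \bm{0}\end{bmatrix}$ and $\bar{\bm{Q}}\bPi = \bm{Q}$. The sign vector $\bq$ is unconstrained by the objective, yielding exactly the claimed optimal set $\{\bm{Q}\diag(\bq): \bq \in \{1,-1\}^K\}$.

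For the separation bound, I would fix an optimal $\bX^* = \bar{\bm{Q}}\bPi^*\diag(\bq^*) = \bm{Q}\diag(\bq^*)$ (with $\bPi^* = \begin{bmatrix}\bI_K \\ \bm{0}\end{bmatrix}$) and any other critical point $\bX' = \bar{\bm{Q}}\bPi'\diag(\bq') \ne \bX^*$. Orthogonal invariance of the Frobenius norm under left multiplication by $\bar{\bm{Q}}^\top$ gives
\begin{equation*}
\|\bX^* - \bX'\|_F^2 = \sum_{k=1}^K \left\| \bm{e}_k\, (\bq^*)_k - \bPi'_{\cdot,k}\, (\bq')_k \right\|_2^2,
\end{equation*}
where $\bm{e}_k$ denotes the $k$-th standard basis vector in $\br^d$. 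Each summand is the squared norm of a difference of two signed standard basis vectors, so it equals $0$ (identical column), $4$ (same support, opposite signs), or $2$ (different supports). Because $\bX' \ne \bX^*$, at least one summand is strictly positive and hence is at least $2$, so $\|\bX^* - \bX'\|_F \ge \sqrt{2}$.

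The main care point is the rearrangement step: strict monotonicity of both $(a_k)$ and $(\lambda_k)$, together with $\lambda_K > 0$ being strictly larger than the implicit ``zero eigenvalues'' contributed by directions in $\bm{Q}_\perp$, is precisely what uniquely pins down the optimal $\bPi$ and rules out spurious optimal critical points that mix in columns of $\bm{Q}_\perp$. Everything else reduces to routine matrix algebra and the per-column bookkeeping just described.
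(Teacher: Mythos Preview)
Your proposal is correct and follows the same overall strategy as the paper: reduce to the parametrized family of critical points from \Cref{lem: 1stoptcon} and then locate the maximizers within that family. The paper argues by two separate cases (first ruling out any column lying in the span of $\bm{Q}_\perp$, then ruling out any nontrivial permutation of the columns of $\bm{Q}$), whereas you evaluate $g$ in closed form on the whole family and invoke the strict rearrangement inequality---a somewhat cleaner execution that also spells out the column-wise computation for the $\sqrt{2}$ separation, which the paper simply declares straightforward.
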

\begin{proof}\,
	According to \Cref{lem: 1stoptcon}, every global optimal point $\hat{\bX}$ takes the form $\hat{\bX} = \bar{\bm{Q}} \bPi \diag(\bq)$ for some $\bPi$ and $\bq$. First, we prove that every column vector of the matrix $\hat{\bX} \diag(\bq)^{-1}$ does not belong to $\bm{Q}_{\perp}$. Suppose that one column of $\hat{\bX}$ is a vector in the column space of $\bm{Q}_{\perp}$. To simplify notation, we consider $\hat{\bX}=\left[\bm{B},\bm{b}\right]$, where $\bm{B}$ contains the column vectors of $\bQ$ and $\bm{b}$ is the vector in the column space of $\bQ_{\perp}$. It follows that
	\begin{equation}
		\label{ineq:optimal_expression}
		\begin{aligned}
			g(\hat{\bX}) & ={\tr}\left(\left[\begin{array}{l}
				\bm{B}^{\top} \\
				\bm{b}^{\top}
			\end{array}\right] \bm{Q} \bm{\Theta}^2 \bm{Q}^\top \left[\bm{B},\bm{b}\right] {\diag}(\bm{a})\right)
			=\tr\left(\left(\begin{array}{c}
				\bm{B}^{\top} \bQ \\
				\bm{0}
			\end{array}\right) \bm{\Theta}^2\left[\bm{Q}^{\top} \bm{B}, \bm{0} \right] \diag(\bm{a})\right) \\
			& =\tr\left(\left[\begin{array}{cc}
				\bm{B}^{\top} \bm{Q} \bm{\Theta}^2 \bm{Q}^\top \bm{B} & \bm{0} \\
				\bm{0} & 0
			\end{array}\right] \diag(\bm{a})\right) 
			<\sum_{k=1}^K \lambda_k a_k = g(\bm{Q} \diag(\bq)).
		\end{aligned}    
	\end{equation}
	We conclude that if there exists some column vector of $\hat{\bX}$ belongs to $\bm{Q}_{\perp}$, the inequality will hold strictly and the associated function value will be strictly less than $g(\bm{Q} \diag(\bq))$.
	
	Second, we prove there is no such optimal point $\hat{\bX}$ that the columns of $\hat{\bX}$ are permutations of columns of $\bQ$. Suppose $\hat{\bX}=\bQ \cdot \bm{P}$, where $\bm{P} \in \mathbb{R}^{K \times K}$ is a permutation matrix and $\bm{P} \neq \bI_K$. It holds that
	\begin{equation}
		\begin{aligned}
			g(\hat{\bX}) =\tr \left(\bm{P}^{\top} \bQ^{\top} \bQ \bm{\Theta}^2 \bQ^{\top} \bQ \bm{P} \diag(\bm{a})\right) =\tr\left(\bm{P}^{\top} \bm{\Theta^2} \bm{P} \diag(\bm{a})\right)< \sum_{k=1}^K \lambda_k a_k
		\end{aligned} 
	\end{equation}
	The last inequality follows from the fact that $a_1>a_2>\dots>a_K >0$ and $\lambda_1>\lambda_2>\dots>\lambda_K >0$. Therefore, we obtain the desired result of the optimal solution set. Showing that the distance between every optimal solution and any other critical points is no less than $\sqrt{2}$ is straightforward.
\end{proof}

To eliminate the effect of multiple optimal solutions, we define the distance between two orthonormal matrices ${\bX}, \bm{Q} \in \Br^{d\times K}$ as 
\begin{equation}\label{dist:XU1}
	d_F({\bX},\bm{Q}) = \min_{\bq \in \{1,-1\}^K} \| {\bX} - \bm{Q}{\rm diag}(\bq) \|_F.
\end{equation} 
Observe that $d_F(\bX,\bm{Q}) = 0$ is equivalent to $\left\| |\bX^\top \bm{Q}| - \bI_K \right\|_F = 0$, where $|\cdot|$ is the component-wise absolute value function. In addition, $d_F^2({\bX},\bm{Q}) = 2 (K- {\rm tr}(|{\bX}^\top \bm{Q}| ))$.

Next, we provide an existing local error bound result \cite[Theorem 2]{liu2019quadratic}, which characterizes the growth behavior of the objective function in \eqref{P:min g} around each optimal solution.
\begin{fact}[\textbf{Local error bound}]
	Consider problem \eqref{P:min g}. There exist $\delta_3 \in (0,\frac{\sqrt{2}}{2})$ and $\eta >0$ such that for all $\bX \in \St(d,K)$ with $d_F(\bX,\bm{Q})\leq \delta_3$,
	\begin{align}\label{eq: globalER}
		d_F(\bX,\bm{Q})\leq \eta \cdot \| \grad g(\bX) \|_F,
	\end{align}
	where $\grad g(\bX) = \left(\bm{I}_d - \frac{1}{2}\bX \bX^\top \right) \left(\nabla g(\bx) - \bX \nabla g(\bX)^\top \bX \right)$.
	\label{lemma:EB_grad}
\end{fact}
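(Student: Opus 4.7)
\bigskip

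\noindent\textbf{Proof proposal.} The plan is to derive the local error bound from a quantitative second-order analysis at the optimal solution $\bm{Q}$. Because the previous lemma shows that the global maximizers $\{\bm{Q}\diag(\bq) : \bq \in \{1,-1\}^K\}$ are pairwise at Frobenius distance at least $\sqrt{2}$, restricting to $d_F(\bX,\bm{Q}) \le \delta_3 < \sqrt{2}/2$ ensures the minimizing $\bq$ in the definition of $d_F(\bX,\bm{Q})$ is unique. After absorbing that sign choice into $\bm{Q}$, it suffices to bound $\|\bX - \bm{Q}\|_F$ by $\|\grad g(\bX)\|_F$ when $\bX$ lies in a small Frobenius ball around $\bm{Q}$, and then revert to $d_F$ at the end.

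Next, I would use the local parameterization $\bX = \bm{Q}\bm{R} + \bm{Q}_\perp \bm{T}$ subject to $\bm{R}^\top \bm{R} + \bm{T}^\top \bm{T} = \bm{I}_K$. Writing $\bm{R} = \bm{I}_K + \bm{W}$ with $\bm{W}$ small, the constraint forces the symmetric part of $\bm{W}$ to be $O(\|\bm{W}_{\text{skew}}\|_F^2 + \|\bm{T}\|_F^2)$, so at first order the free variables are $(\bm{W}_{\text{skew}}, \bm{T})$. Plugging this into $\nabla g(\bX) = 2\bm{Q}\bm{\Theta}^2 \bm{Q}^\top \bX\diag(\ba)$ and then into $\grad g(\bX) = (\bm{I}_d - \tfrac{1}{2}\bX\bX^\top)(\nabla g(\bX) - \bX\nabla g(\bX)^\top \bX)$, I would Taylor expand around $\bX = \bm{Q}$. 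The zeroth-order term vanishes because $\bm{\Theta}^2$ and $\diag(\ba)$ commute, and the linearization decouples into two pieces: a $\bm{Q}$-component that acts on $\bm{W}_{\text{skew}}$ via the commutator map $\bm{W}_{\text{skew}} \mapsto \bm{\Theta}^2\bm{W}_{\text{skew}}\diag(\ba) - \diag(\ba)\bm{W}_{\text{skew}}\bm{\Theta}^2$, whose eigenvalues are $\lambda_i a_j - \lambda_j a_i$; and a $\bm{Q}_\perp$-component equal to $2\bm{T}\bm{\Theta}^2\diag(\ba)$, whose eigenvalues are $2\lambda_k a_k$.

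Under the strict orderings $\lambda_1 > \cdots > \lambda_K > 0$ and $a_1 > \cdots > a_K > 0$, every one of these eigenvalues is bounded away from zero in absolute value. Pairing off the conjugate skew modes $(i,j)$ and $(j,i)$ yields a positive-definite $2\times 2$ block with spectrum $|\lambda_i a_j - \lambda_j a_i|$, giving a linearized coercivity estimate $\|\grad g(\bX)\|_F \ge c_0\,\|(\bm{W}_{\text{skew}}, \bm{T})\|_F$ with $c_0$ depending only on $\min_{i\ne j}|\lambda_i a_j - \lambda_j a_i|$ and $\min_k \lambda_k a_k$. Shrinking $\delta_3$ absorbs the quadratic remainders in both the Taylor expansion of $\grad g$ and the orthonormality constraint, so the same bound survives with a slightly smaller constant. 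Combining this with $\|\bX - \bm{Q}\|_F^2 = \|\bm{W}_{\text{skew}}\|_F^2 + \|\bm{T}\|_F^2 + O\bigl((\|\bm{W}_{\text{skew}}\|_F^2+\|\bm{T}\|_F^2)^2\bigr)$ delivers the desired inequality $d_F(\bX,\bm{Q}) \le \eta\,\|\grad g(\bX)\|_F$.

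The main obstacle is the quantitative bookkeeping in the coercivity step: converting the sign-indefinite spectrum $\pm(\lambda_i a_j - \lambda_j a_i)$ of the commutator action into a clean Frobenius-norm lower bound requires a symmetrization argument, and the constant $c_0$ must be tracked carefully so that $\delta_3$ can be chosen small enough to dominate the higher-order error. A secondary subtlety is that the projection factor $(\bm{I}_d - \tfrac{1}{2}\bX\bX^\top)$ differs from $(\bm{I}_d - \bm{Q}\bm{Q}^\top)$ by an $O(\|\bX - \bm{Q}\|_F)$ term, so one has to verify that this factor maps the first-order ambient gradient to the tangent space without cancelling any of its principal components; a direct check at $\bX = \bm{Q}$ shows it acts as the identity on the relevant directions, so the lower bound on $c_0$ is preserved.
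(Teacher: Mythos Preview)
The paper does not prove this statement at all: it is stated as a \emph{Fact} and simply cited from \cite[Theorem 2]{liu2019quadratic}. So your proposal is not competing with any argument in the paper; you are supplying a self-contained proof where the authors defer to the literature. Your overall strategy---show that the Riemannian gradient map has a nonsingular linearization at $\bm{Q}$ (equivalently, that $\bm{Q}$ is a nondegenerate critical point) and then absorb higher-order terms by shrinking $\delta_3$---is a standard and legitimate route to a local error bound of this type.

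There is, however, a concrete error in your eigenvalue computation for the skew block. You claim the $\bm{Q}$-component of the linearized gradient acts on $\bm{W}_{\text{skew}}$ via $\bm{W}\mapsto \bm{\Theta}^2\bm{W}\diag(\ba)-\diag(\ba)\bm{W}\bm{\Theta}^2$ with eigenvalues $\lambda_i a_j-\lambda_j a_i$. If you actually expand $\nabla g(\bX)-\bX\nabla g(\bX)^\top\bX$ at $\bX=\bm{Q}(\bI+\bm{W})$ with $\bm{W}^\top=-\bm{W}$ (or equivalently differentiate the second term in the expression for $\grad g$ displayed in the proof of Lemma~\ref{lem: 1stoptcon}), the $(i,j)$ entry of the first-order piece is proportional to $(\lambda_i-\lambda_j)(a_i-a_j)\,W_{ij}$, not $(\lambda_i a_j-\lambda_j a_i)\,W_{ij}$. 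This matters: under the hypotheses $\lambda_1>\cdots>\lambda_K>0$ and $a_1>\cdots>a_K>0$ the quantity $\lambda_i a_j-\lambda_j a_i$ can vanish (take $\lambda=(4,2)$, $a=(2,1)$, so $\lambda_1 a_2=\lambda_2 a_1=4$), whereas $(\lambda_i-\lambda_j)(a_i-a_j)$ is always strictly positive for $i\neq j$. With your stated eigenvalues the coercivity constant $c_0$ could be zero and the argument collapses; with the correct eigenvalues it is bounded below by $\min_{i\neq j}(\lambda_i-\lambda_j)(a_i-a_j)>0$ and the rest of your outline (the $\bm{Q}_\perp$-block, the quadratic-remainder bookkeeping, and the projection factor $(\bI_d-\tfrac{1}{2}\bX\bX^\top)$ acting as $\tfrac{1}{2}\bI$ on the $\bm{Q}$-block at first order) goes through as you describe.
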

Based on the above local error bound result, we derive a quadratic growth property of problem \eqref{P:min g}, which, to the best of our knowledge, is new and can be of independent interest. We remark that the quadratic growth property (\Cref{lemma: QG} and \Cref{cor: gqg}) will play an important role in proving Theorem \ref{thm: est_performace}. In fact, various non-convex statistical estimation problems including phase/group synchronization \cite{boumal2016nonconvex,zhu2021orthogonal} and dictionary learning \cite{shen2020complete} rely on the quadratic growth property in theoretical developments.
\begin{lemma}[\textbf{Local quadratic growth}] \label{lemma: QG} The following local quadratic growth property holds for problem \eqref{P:min g}:
	\begin{align}\label{eq: quad-grow}
		g(\bm{Q}) -g(\bX) \geq \frac{1}{4\eta} \cdot d_F^2(\bX,\bm{Q}), \quad \forall \bX \in \St(d,K), d_F(\bX,\bm{Q}) \leq \beta_1
	\end{align}
	with some $\beta_1 \in (0,\frac{2}{3}\delta_3)$.
\end{lemma}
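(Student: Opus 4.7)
The plan is to convert the local error bound of \Cref{lemma:EB_grad} into the desired quadratic growth bound on $g(\bm{Q})-g(\bX)$ by running a Riemannian gradient-ascent flow of $g$ and integrating along it. Intuitively, the error bound forces $\|\grad g(\bX)\|_F$ to be at least $\eta^{-1}d_F(\bX,\bm{Q})$ throughout a neighborhood of $\bm{Q}$, so any ascent trajectory from $\bX$ toward $\bm{Q}$ must accumulate a quantifiable amount of function-value gain; integrating that gain along an arc-length parametrization of the trajectory will produce exactly the quadratic estimate claimed.

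Concretely, I would choose $\beta_1$ small (e.g.\ $\beta_1=\delta_3/3\in(0,\tfrac{2}{3}\delta_3)$) and then run the Riemannian gradient flow $\dot{\bX}(t)=\grad g(\bX(t))$ on $\St(d,K)$ with $\bX(0)=\bX$. Reparametrizing by arc length $s$, the resulting curve $\tilde{\bX}(s)$ satisfies $\|\tilde{\bX}'(s)\|_F=1$ and $\tilde{\bX}'(s)=\grad g(\tilde{\bX}(s))/\|\grad g(\tilde{\bX}(s))\|_F$, so
\begin{equation*}
\frac{d}{ds}g(\tilde{\bX}(s)) = \|\grad g(\tilde{\bX}(s))\|_F \ \geq\ \frac{1}{\eta}\,d_F(\tilde{\bX}(s),\bm{Q})
\end{equation*}
by \Cref{lemma:EB_grad}. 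Since extrinsic Frobenius distance is dominated by arc length on an embedded submanifold, $s\mapsto d_F(\tilde{\bX}(s),\bm{Q})$ is $1$-Lipschitz and hence $d_F(\tilde{\bX}(s),\bm{Q})\geq r_0-s$ for $s\in[0,r_0]$, where $r_0:=d_F(\bX,\bm{Q})$. Integrating the combined differential inequality $\tfrac{d}{ds}g(\tilde{\bX}(s))\geq(r_0-s)/\eta$ from $0$ to $r_0$ and using $g(\tilde{\bX}(r_0))\leq g(\bm{Q})$ then gives
\begin{equation*}
g(\bm{Q})-g(\bX)\ \geq\ \int_0^{r_0}\frac{r_0-s}{\eta}\,ds\ =\ \frac{r_0^2}{2\eta}\ \geq\ \frac{1}{4\eta}\,d_F^2(\bX,\bm{Q}),
\end{equation*}
which is the advertised quadratic growth, with a factor-of-two slack built in.

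The main obstacle is verifying that the ascent trajectory $\tilde{\bX}(s)$ does not leave the EB region $\{d_F(\cdot,\bm{Q})\leq\delta_3\}$ over $s\in[0,r_0]$, since \Cref{lemma:EB_grad} applies only there. For the conservative choice $\beta_1=\delta_3/3$ this follows immediately from the triangle inequality $d_F(\tilde{\bX}(s),\bm{Q})\leq r_0+s\leq 2r_0\leq\tfrac{2}{3}\delta_3<\delta_3$. To push $\beta_1$ closer to the stated upper endpoint $\tfrac{2}{3}\delta_3$, I would invoke the preceding structural lemma---which guarantees that every other critical point of $g$ lies at Frobenius distance at least $\sqrt{2}$ from $\bm{Q}$---to rule out the flow leaking into another critical basin, together with a continuity/maximality argument on the largest sub-interval of $[0,r_0]$ on which $\tilde{\bX}$ stays inside the EB ball. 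The gap between the integrated constant $1/(2\eta)$ and the announced $1/(4\eta)$ provides exactly the room needed to absorb any boundary effects from this refinement. A discrete variant of the same argument, using one-step GPM-type updates of $g$ in place of the continuous flow, would also work and could in principle sharpen the constants further.
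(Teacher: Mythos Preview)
Your gradient-flow argument is correct and genuinely different from the paper's proof. The paper argues by contradiction via Ekeland's variational principle: assuming the quadratic growth fails at some $\bX^0$, it produces a nearby point $\bZ^0$ with $\|\grad g(\bZ^0)\|_F\le \tau_0/\lambda_0$ (for suitably chosen $\lambda_0,\tau_0$), applies the error bound of \Cref{lemma:EB_grad} at $\bZ^0$, and reaches a numerical contradiction. Your route instead integrates the error bound along an ascent trajectory, which is the other standard way of passing from a gradient-dominance/KL-type inequality to quadratic growth. Two minor points worth tightening: (i) you should explicitly dispose of the case where the total arc length $S$ of the flow is less than $r_0$---if $S<r_0$ the trajectory stays in the ball of radius $r_0+S<2r_0<\delta_3$, hence converges to a critical point there, which by \Cref{lemma:EB_grad} must be optimal, forcing $S\ge r_0$ after all; (ii) your triangle-inequality confinement already gives $\beta_1$ up to $\delta_3/2$, so the speculative paragraph about invoking the $\sqrt{2}$-separation of critical points to push toward $\tfrac{2}{3}\delta_3$ is unnecessary for the lemma as stated. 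In terms of trade-offs, your integration yields the sharper constant $\tfrac{1}{2\eta}$ (versus the paper's $\tfrac{1}{4\eta}$) but on a slightly smaller ball, while the Ekeland route is more self-contained and avoids the ODE/arc-length machinery.
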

\begin{proof}\,
	Since $\beta_1 < \frac{\sqrt{2}}{2}$, without loss of generality, we assume $d_F(\bX, \bm{Q}) = \| \bX - \bm{Q}\|_F$. Suppose that \eqref{eq: quad-grow} does not hold for every (small) $\beta_1>0$. Then, there exists an $\bX^0 \in \St(d,K)$ satisfying $\| \bX^0 - \bm{Q}\|_F \leq \beta_1$ with $\beta_1 = \frac{2}{3} \delta_3$ such that
	\begin{equation}
		g(\bm{Q}) \leq g(\bX^0) + \frac{1}{4\eta} \| \bX^0 - \bm{Q}\|_F^2 -t_0  \text{ and } t_0 >0.
	\end{equation}
	By the above inequality, we know that $\bX^0 \neq \bm{Q}$. Let $\lambda_0 = \frac{1}{2} \| \bX^0- \bm{Q}\|_F$ and $\tau_0 = \frac{1}{4\eta} \| \bX^0- \bm{Q}\|_F^2 - t_0>0$.
	From Ekeland's variational principle (see, e.g., \cite[Theorem 2.26]{mordukhovich2006variational}), there exists $\bm{Z}^0 \in \St(d,K)$ such that $\| \bm{Z}^0 - \bX^0 \|_F \leq \lambda_0$ and 
	$\bZ^0 = {\arg\max}_{\bX \in \St(d,K)} \left\{ g(\bX) - \frac{\tau_0}{\lambda_0} \| \bX- \bZ^0\|_F \right\}$.
	The optimality condition yields 
	\begin{equation}
		\bm{0} \in \nabla g(\bZ^0) + \mathcal{N}_{\St(d,K)}(\bZ^0) - \frac{\tau_0}{\lambda_0} \cdot \mathbb{B}(\bm{0};1),
	\end{equation}
	where $\mathcal{N}_{\St(d,K)}(\bZ^0)$ denotes the normal space of the Stiefel manifold at point $\bZ^0$ and $\mathbb{B}(\bm{0};1) \subset \mathbb{R}^{d \times K}$ represents the Euclidean ball with center $\bm{0}$ and radius $1$. From the optimality condition, we have $\| \grad g(\bZ^0) \|_F \leq \frac{\tau_0}{\lambda_0} $. We also have $\| \bZ^0 -\bm{Q}\|_F \leq \| \bZ^0 - \bX^0 \|_F + \| \bX^0 - \bm{Q}\|_F \leq \delta_3$. This, together with the error bound result \eqref{eq: globalER}, gives $\| \bZ^0- \bm{Q} \|_F \leq \eta\| \grad g(\bZ^0) \|_F$. In addition, noticing that
	\begin{equation}
		\| \bX^0 - \bm{Q} \|_F \leq \| \bX^0 - \bZ^0 \|_F + \| \bZ^0 - \bm{Q}\|_F  \leq \lambda_0 + \eta \| \grad g(\bZ^0) \|_F,
	\end{equation}
	which results in $\frac{1}{2} \| \bX^0 - \bm{Q} \|_F \leq \eta\frac{\tau_0}{\lambda_0}$.
	By the definition of $\lambda_0$ and $\tau_0$, this further yields
	\begin{equation}
		\frac{1}{4} \| \bX^0 - \bm{Q} \|^2_F \leq \eta \cdot (\frac{1}{4 \eta} \|\bX^0 -\bm{Q}\|_F^2 - t_0),
	\end{equation}
	which is a contradiction since $\eta > 0$ and $t_0 >0$.
\end{proof}

The obtained local quadratic growth property can be globalized without too much effort, which is similar to the proof idea of \cite[Corollary 1]{liu2019quadratic}.
\begin{corollary}[\textbf{Global quadratic growth}] \label{cor: gqg} There exists an $\bar{\eta}>0$ such that for all $\bX \in \St(d,K)$,
	\begin{align}
		g(\bm{Q}) - g(\bX) \geq \bar{\eta} \cdot d_F^2(\bX, \bm{Q}).
	\end{align}
\end{corollary}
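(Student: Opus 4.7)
The plan is to globalize the local quadratic growth property of Lemma \ref{lemma: QG} by splitting the feasible set $\St(d,K)$ into a ``near'' region (where $d_F(\bX, \bm{Q})\leq \beta_1$) and a ``far'' region (where $d_F(\bX, \bm{Q}) > \beta_1$), and then choosing $\bar\eta$ small enough to handle both cases uniformly. On the near region, Lemma \ref{lemma: QG} directly delivers $g(\bm{Q}) - g(\bX) \geq \frac{1}{4\eta} d_F^2(\bX, \bm{Q})$, so any $\bar\eta \leq \frac{1}{4\eta}$ is sufficient there. The remaining task is to handle the far region via a compactness-and-continuity argument.

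For the far region, I would define
\[
S = \{\bX \in \St(d,K) : d_F(\bX, \bm{Q}) \geq \beta_1\}.
\]
Since the Stiefel manifold is compact, the function $\bX \mapsto d_F(\bX, \bm{Q}) = \min_{\bq \in \{1,-1\}^K}\|\bX - \bm{Q}\diag(\bq)\|_F$ is continuous, so $S$ is compact. The previous lemma identifies the full set of global maximizers of $g$ as $\{\bm{Q}\diag(\bq) : \bq \in \{1,-1\}^K\}$, all of which lie at $d_F$-distance $0$ from $\bm{Q}$ and hence are excluded from $S$. By continuity of $g$ on the compact set $S$, the maximum $\max_{\bX \in S} g(\bX)$ is attained at some $\bX^\star \in S$, and because $\bX^\star$ is not a global maximizer, we have
\[
\epsilon := g(\bm{Q}) - \max_{\bX \in S} g(\bX) > 0.
\]
Combined with the uniform upper bound $d_F^2(\bX, \bm{Q}) \leq 2K$ (which follows from $\|\bX - \bm{Q}\diag(\bq)\|_F^2 \leq 2\|\bX\|_F^2 + 2\|\bm{Q}\|_F^2 = 4K$, and in fact the tighter bound $d_F^2 \leq 2K$ by expanding and using orthonormality), this gives $g(\bm{Q}) - g(\bX) \geq \epsilon \geq \frac{\epsilon}{2K} \cdot d_F^2(\bX, \bm{Q})$ for all $\bX \in S$.

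Finally, I would choose
\[
\bar\eta = \min\left\{\frac{1}{4\eta},\ \frac{\epsilon}{2K}\right\} > 0,
\]
which validates the inequality $g(\bm{Q}) - g(\bX) \geq \bar\eta \cdot d_F^2(\bX, \bm{Q})$ simultaneously on the near and far regions, hence on all of $\St(d,K)$.

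I do not expect any serious obstacle in this argument; the compactness of the Stiefel manifold together with Lemma \ref{lemma: QG} and the explicit characterization of the optimal solution set makes the globalization nearly automatic. The only subtle point is to verify that every point of $S$ is strictly suboptimal, which is guaranteed by the earlier lemma showing that the global maximizers of $g$ are exactly $\{\bm{Q}\diag(\bq) : \bq \in \{1,-1\}^K\}$, all of which satisfy $d_F(\cdot, \bm{Q}) = 0 < \beta_1$.
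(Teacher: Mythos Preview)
Your proposal is correct and follows essentially the same globalization strategy the paper alludes to (it defers to \cite[Corollary 1]{liu2019quadratic} without spelling out the details): use the local quadratic growth of Lemma~\ref{lemma: QG} on the near region and a compactness/strict-suboptimality argument on the far region, then take the minimum of the two constants. The only minor comment is that the bound $d_F^2(\bX,\bQ)\le 2K$ is not needed in its sharp form---any finite uniform bound (e.g., $4K$) would do---but your justification via $d_F^2(\bX,\bQ)=2(K-\tr(|\bX^\top\bQ|))\le 2K$ is correct.
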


Now, we turn our focus to the analysis of GPM-QPOC. Since the update \eqref{update:gpm-qpoc} can be seen as a fixed-point iteration for problem \eqref{P:min g}, we characterize its fixed points and show that fixed points are critical points.

	
	\begin{lemma}[\textbf{Fixed points of GPM-QPOC}]
		\label{lem: fixedpoint}
		A point $\bX \in \St(d,K)$ is a fixed point of 
		\eqref{update:gpm-qpoc} if and only if  $\tr\left( \bX^\top  \mathcal{A}_{\alpha}(\bX) \right) = \| \mathcal{A}_{\alpha} (\bX)\|_*$, where $\|\cdot \|_*$ represents the nuclear norm. Furthermore, every fixed point of \eqref{update:gpm-qpoc} is a first-order critical point of problem \eqref{P:min g}.
	\end{lemma}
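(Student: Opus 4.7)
The plan is to prove the equivalence through the variational characterization of the Stiefel projection, and then derive the critical-point property from the first-order optimality condition of that variational problem.

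For the first part, I would start from the standard fact that for any $\bm{Y} \in \mathbb{R}^{d \times K}$,
\[
\PCal_{\rm St}(\bm{Y}) \;=\; \argmax_{\bm{Z} \in \St(d,K)} \tr(\bm{Z}^\top \bm{Y}),
\]
and by the (von Neumann / Ky Fan) trace inequality, the maximum value equals the nuclear norm $\|\bm{Y}\|_*$. Applying this with $\bm{Y} = \mathcal{A}_\alpha(\bX)$, a point $\bX \in \St(d,K)$ lies in $\PCal_{\rm St}(\mathcal{A}_\alpha(\bX))$ if and only if it attains this maximum, i.e. $\tr(\bX^\top \mathcal{A}_\alpha(\bX)) = \|\mathcal{A}_\alpha(\bX)\|_*$. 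This is exactly the first claim.

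For the second part, I would use the fact that a fixed point $\bX$ is a maximizer of the linear objective $\bm{Z} \mapsto \tr(\bm{Z}^\top \mathcal{A}_\alpha(\bX))$ over the Stiefel manifold, and invoke the first-order optimality (KKT) condition for this problem. Equivalently, using the thin SVD $\mathcal{A}_\alpha(\bX) = \bU \bm{\Sigma} \bV^\top$, the fixed-point relation forces $\bX = \bU \bV^\top$ (on the range where $\bm{\Sigma}$ is nonzero), so that
\[
\mathcal{A}_\alpha(\bX) \;=\; \bX\,\bigl(\bV \bm{\Sigma} \bV^\top\bigr) \;=\; \bX \bm{\Lambda},
\]
with $\bm{\Lambda} := \bV\bm{\Sigma}\bV^\top$ symmetric (in fact positive semidefinite). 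Substituting the definition of $\mathcal{A}_\alpha$ and rearranging yields
\[
\bm{Q}\bm{\Theta}^2\bm{Q}^\top \bX \diag(\ba) \;=\; \bX(\bm{\Lambda} - \alpha \bI_K) \;=\; \bX \bm{S},
\]
with $\bm{S} := \bm{\Lambda} - \alpha \bI_K$ symmetric. By Lemma~\ref{lem: 1stoptcon}, this is precisely the condition that $\bX$ be a first-order critical point of \eqref{P:min g}.

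The main subtlety I expect is the SVD step when $\mathcal{A}_\alpha(\bX)$ is rank deficient, in which case $\PCal_{\rm St}$ is set-valued and the identification $\bX = \bU \bV^\top$ is not unique. I would handle this by arguing directly from the optimality condition instead: any maximizer of $\tr(\bm{Z}^\top \bm{Y})$ over $\St(d,K)$ must satisfy $\bm{Y} = \bX \bm{\Lambda}$ for some symmetric $\bm{\Lambda}$ (take the Lagrange multiplier associated with the symmetric part of the constraint $\bm{Z}^\top \bm{Z} = \bI_K$, using that the normal space to $\St(d,K)$ at $\bX$ consists of matrices of the form $\bX \bm{\Lambda}$ with $\bm{\Lambda}$ symmetric). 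This bypasses the need to pick a particular SVD and keeps the argument valid in the rank-deficient case, after which the remainder of the calculation is purely algebraic.
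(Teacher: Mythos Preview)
Your proposal is correct and follows essentially the same route as the paper: both use the variational/von Neumann characterization of $\PCal_{\rm St}$ to get the trace--nuclear-norm equivalence, then use the factorization $\mathcal{A}_\alpha(\bX)=\bX\bm{\Lambda}$ with $\bm{\Lambda}$ symmetric (the paper cites the polar-decomposition characterization $\mathcal{A}_\alpha(\bX)=\bX\bm{B}$ with $\bm{B}\succeq 0$ directly, whereas you derive it via SVD/KKT) and feed it into Lemma~\ref{lem: 1stoptcon}. Your explicit handling of the rank-deficient case via the normal-space argument is a nice touch that the paper leaves implicit in its citation.
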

	\begin{proof}\, If $\bX \in \St(d,K)$ is a fixed point, we have $ \bX \in \PCal_{\rm St}(\mathcal{A}_{\alpha}(\bX))$, or equivalently, $ \mathcal{A}_{\alpha}(\bX) = \bX \bm{B}$
		with $\bm{B}$ being a positive semidefinite matrix \cite{absil2012projection}. This implies 
		\begin{equation}
			\tr\left( \bX^\top \mathcal{A}_{\alpha}(\bX) \right)  = \tr(\bm{B}) = \| \mathcal{A}_{\alpha} (\bX)\|_*.
		\end{equation}
		For the ``if'' part, by Von Neumann's trace inequality, we know
		\begin{equation}
			\tr\left( \bX^\top \mathcal{A}_{\alpha}(\bX) \right) \leq \sum_{k=1}^K \sigma_k(\bX)\cdot \sigma_k(\mathcal{A}_{\alpha}(\bX)) = \| \mathcal{A}_{\alpha}(\bX)\|_*.
		\end{equation}
		Furthermore, the inequality holds with equality if and only if $\bX \in \PCal_{\rm St}(\mathcal{A}_{\alpha}(\bX))$. 
		
		If $\bX$ is a fixed point of \eqref{update:gpm-qpoc}, then we know $ \bX \in \PCal_{\rm St}(\mathcal{A}_{\alpha}(\bX))$, or equivalently, $\mathcal{A}_{\alpha}(\bX) = \bX \bm{B}$ with $\bm{B}$ being a symmetric positive semidefinite matrix. This implies
		\begin{equation}
			\bm{Q} \bm{\Theta}^2 \bm{Q}^\top \bX\diag(\ba)=\mathcal{A}_{\alpha}(\bX)-\alpha \bX = \bX(\bm{B}-\alpha \bI_K).
		\end{equation}
		From the first-order optimality condition in Lemma \ref{lem: 1stoptcon}, $\bX$ is a first-order critical point of \eqref{P:min g}.
	\end{proof}
	\subsection{Error bound and linear convergence rate of GPM-QPOC}
	In this subsection, we prove that given a suitable initialization, the sequence of iterates and the associated sequence of objective values generated by \eqref{update:gpm-qpoc} for problem \eqref{P:min g} will converge linearly to a global maximizer and the optimal value, respectively. Specifically, we have the following result.
	
	\begin{theorem}[\textbf{Linear convergence of GPM-QPOC}] \label{thm:LC for PGD}
		Consider problem \eqref{P:min g}. Given a suitable initial point $\bX^0$ (see Proposition \ref{prop:local_region}(a) for details), if we apply \eqref{update:gpm-qpoc} with $0 <\alpha < \lambda_K a_K - \lambda_1 a_1 \delta$ (see Theorem \ref{thm: gpmeb} for the definition of $\delta>0$) to solve it, then the iterates converge linearly to a global maximizer. That is,
		\begin{equation} \label{eq: qpoc-gpm-linear}
			g(\bQ) - g(\bX^{t+1}) \leq \left(g(\bQ) - g(\bX^t)\right) \cdot \gamma \text{ and }\; d_F(\bX^t, \bQ) \leq a \cdot \left(g(\bQ) - g(\bX^0)\right)^{1/2}\cdot \left(\sqrt{\gamma}\right)^{t},
		\end{equation}
		where $a > 0, \gamma \in (0,1)$ are fixed constants.
	\end{theorem}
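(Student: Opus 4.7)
The plan is first to establish a linear contraction of the objective gap $g(\bQ) - g(\bX^t)$ and then promote this into the stated iterate bound via the global quadratic growth of \Cref{cor: gqg}. Three ingredients are needed: a \emph{sufficient-ascent} inequality $g(\bX^{t+1}) - g(\bX^t) \geq \alpha \|\bX^{t+1} - \bX^t\|_F^2$; the \emph{local error bound} of \Cref{thm: gpmeb}, which for every $\bX$ in a $\delta$-neighborhood of $\bQ$ (in the $d_F$ sense) controls $d_F(\bX,\bQ) \leq \tau \|\bX^+ - \bX\|_F$ where $\bX^+$ denotes its GPM-QPOC update; and a \emph{quadratic upper bound} $g(\bQ) - g(\bX) \leq L_{\mathrm{u}} \cdot d_F^2(\bX,\bQ)$ near $\bQ$. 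Chaining these three on $\bX = \bX^t$ gives $g(\bX^{t+1}) - g(\bX^t) \geq \tfrac{\alpha}{\tau^2 L_{\mathrm{u}}}\bigl(g(\bQ) - g(\bX^t)\bigr)$, which rearranges to $g(\bQ) - g(\bX^{t+1}) \leq \gamma\bigl(g(\bQ) - g(\bX^t)\bigr)$ with $\gamma = 1 - \tfrac{\alpha}{\tau^2 L_{\mathrm{u}}} \in (0,1)$; the stepsize ceiling $\alpha < \lambda_K a_K - \lambda_1 a_1 \delta$ is precisely what \Cref{thm: gpmeb} requires to produce a finite $\tau$ on the $\delta$-neighborhood.

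First I would derive the sufficient ascent. Since $\bX^{t+1} \in \PCal_{\St}(\mathcal{A}_\alpha(\bX^t))$ maximizes $\tr(\bX^\top \mathcal{A}_\alpha(\bX^t))$ over $\bX \in \St(d,K)$, we have $\tr((\bX^{t+1})^\top \mathcal{A}_\alpha(\bX^t)) \geq \tr((\bX^t)^\top \mathcal{A}_\alpha(\bX^t))$; subtracting the $\alpha$-term and invoking the Stiefel identity $\tr((\bX^{t+1})^\top \bX^t) = K - \tfrac{1}{2}\|\bX^{t+1}-\bX^t\|_F^2$ yields $\tr((\bX^{t+1}-\bX^t)^\top \bQ\bm{\Theta}^2\bQ^\top \bX^t \diag(\ba)) \geq \tfrac{\alpha}{2}\|\bX^{t+1}-\bX^t\|_F^2$. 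Because $g$ is a smooth quadratic with $\nabla g(\bX) = 2\bQ\bm{\Theta}^2\bQ^\top \bX \diag(\ba)$, its exact Taylor expansion reads $g(\bX^{t+1}) - g(\bX^t) = \tr((\bX^{t+1}-\bX^t)^\top \nabla g(\bX^t)) + \tr((\bX^{t+1}-\bX^t)^\top \bQ\bm{\Theta}^2\bQ^\top(\bX^{t+1}-\bX^t)\diag(\ba))$; the first-order term is $\geq \alpha\|\bX^{t+1}-\bX^t\|_F^2$ by the above, and the quadratic remainder is nonnegative since $\bQ\bm{\Theta}^2\bQ^\top \succeq \bm{0}$ and $\diag(\ba) \succ \bm{0}$, producing the ascent inequality. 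For the quadratic upper bound, I would exploit the fact that $\nabla g(\bQ) = \bQ \cdot (2\bm{\Theta}^2\diag(\ba))$ is of the form $\bQ\bm{S}$ with $\bm{S}$ symmetric and hence lies in the normal space of $\St(d,K)$ at $\bQ$; combined with the feasibility identity $\bQ^\top(\bX-\bQ) + (\bX-\bQ)^\top \bQ = -(\bX-\bQ)^\top(\bX-\bQ)$, the first-order term $\langle \nabla g(\bQ), \bX-\bQ\rangle$ becomes itself quadratic in $\bX-\bQ$, so $g(\bQ) - g(\bX) \leq L_{\mathrm{u}} \|\bX-\bQ\|_F^2$ holds with $L_{\mathrm{u}} = O(\lambda_1 a_1)$, and by symmetry of the $2^K$ global maximizers the same bound holds with $d_F(\bX,\bQ)$ in place of $\|\bX-\bQ\|_F$.

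The main obstacle is keeping all iterates inside the local region where both \Cref{thm: gpmeb} and the quadratic upper bound apply, and simultaneously controlling the reflective ambiguity $\bQ \mapsto \bQ\diag(\bq)$. I would handle both by a single induction: the initialization guarantee of \Cref{prop:local_region}(a) places $\bX^0$ strictly closer to $\bQ$ than the $\sqrt{2}$ separation between $\bQ$ and every other critical point; assuming $d_F(\bX^t,\bQ) \leq \delta$, the monotonic ascent from (i) together with the global quadratic growth $d_F^2(\bX^{t+1},\bQ) \leq \bar{\eta}^{-1}(g(\bQ) - g(\bX^{t+1})) \leq \bar{\eta}^{-1}(g(\bQ) - g(\bX^0))$ keeps $\bX^{t+1}$ in the same basin (so $d_F$ is realized by the same $\bq$ throughout and the ``local'' error bound genuinely applies), provided the initialization is tight enough that the right-hand side is below $\delta^2$, which is exactly the condition to be imposed in \Cref{prop:local_region}(a). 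Once the contraction $g(\bQ) - g(\bX^{t+1}) \leq \gamma(g(\bQ) - g(\bX^t))$ is established, one last application of \Cref{cor: gqg} gives $d_F(\bX^t,\bQ) \leq \bar{\eta}^{-1/2}\bigl(g(\bQ) - g(\bX^0)\bigr)^{1/2}(\sqrt{\gamma})^{t}$, which is the second bound in \eqref{eq: qpoc-gpm-linear} with $a = \bar{\eta}^{-1/2}$ and the same $\gamma$.
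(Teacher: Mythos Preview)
Your proposal is correct and follows essentially the same three-ingredient skeleton as the paper: sufficient ascent, a cost-to-go (quadratic upper) estimate, and the local error bound of \Cref{thm: gpmeb}, chained together to contract the objective gap and then translated into an iterate bound. Two small points of divergence are worth flagging. First, what you attribute directly to \Cref{thm: gpmeb}---namely $d_F(\bX,\bQ)\le \tau\|\bX^+-\bX\|_F$---is in the paper the composition of \Cref{thm: gpmeb} (which bounds $d_F$ by the residual $\rho_\alpha(\bX)$) with a separate ``safeguard'' inequality $\rho_\alpha(\bX^t)\le \beta_4\|\bX^{t+1}-\bX^t\|_F$ (\Cref{prop:convergence_prep}(c)); you have silently absorbed this easy SVD computation into your constant $\tau$. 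Second, for the final iterate bound the paper reuses the same chain to get $d_F^2(\bX^t,\bQ)\le \tfrac{\eta_1^2\beta_4^2}{\alpha}(g(\bQ)-g(\bX^t))$ and hence $a=\eta_1\beta_4/\sqrt{\alpha}$, whereas you invoke the global quadratic growth of \Cref{cor: gqg} to obtain $a=\bar\eta^{-1/2}$; both are valid, yours is arguably cleaner.
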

	
	The proof of Theorem \ref{thm:LC for PGD} consists of three main parts. The first, which is the most challenging part, is to establish the following local error bound property for problem \eqref{P:min g}. Roughly speaking, such an error bound provides a computable estimate of the distance from every point to the set of global maximizers, which could be of independent interest.

	Notice that if $\bX \in \St(d,K)$ is a fixed point of the update \eqref{update:gpm-qpoc} and $\mathcal{A}_{\alpha}(\bX)$ admits the SVD
	\begin{align*}
		\mathcal{A}_{\alpha}(\bX) = \bm{U}_{\mathcal{A}_{\alpha}(\bX)} \bm{\Sigma}_{\mathcal{A}_{\alpha}(\bX)} \bV_{\mathcal{A}_{\alpha}(\bX)}^\top,
	\end{align*}
	then $\bX= \bm{U}_{\mathcal{A}_{\alpha}(\bX)} \bV_{\mathcal{A}_{\alpha}(\bX)}^\top$, or equivalently, $\bX \bV_{\mathcal{A}_{\alpha}(\bX)} \bm{\Sigma}_{\mathcal{A}_{\alpha}(\bX)} \bV_{\mathcal{A}_{\alpha}(\bX)}^\top - \mathcal{A}_{\alpha}(\bX) = \bm{0}$. Such an observation naturally motivates us to define the (optimality) residual function $\rho_\alpha(\bX)$ associated with fixed points of \eqref{update:gpm-qpoc} as 
	\begin{equation}
		\rho_\alpha(\bX) = \| \bX \bV_{\mathcal{A}_{\alpha}(\bX)} \bm{\Sigma}_{\mathcal{A}_{\alpha}(\bX)} \bV_{\mathcal{A}_{\alpha}(\bX)}^\top - \mathcal{A}_{\alpha}(\bX)\|_F.
	\end{equation}
	We derive the following local error bound result in terms of the newly defined residual function $\rho_\alpha(\bX)$.
	
	\begin{theorem}[\textbf{Local error bound of GPM-QPOC for QPOC}] There exist $\eta_1 >0$ and a constant $\delta \in (0,\frac{\sqrt{2}}{2} )$ such that for all $\bX \in \St(d,K)$ with $d_F(\bX,\bQ)\leq \delta$,
		\begin{equation}\label{eq: qpoc-errorbound-pgd}
			d_F(\bX,\bQ) \leq \eta_1 \cdot \rho_\alpha(\bX),
		\end{equation}
		where the stepsize $\alpha$ arising in $\rho_\alpha(\bX)$ is required to satisfy $0 \le \alpha < \lambda_K a_K - \lambda_1 a_1 \delta$ in \eqref{update:gpm-qpoc}.
		\label{thm: gpmeb}
	\end{theorem}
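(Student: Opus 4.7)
The plan is to reduce the new error bound in terms of the GPM-tailored residual $\rho_\alpha$ to the existing Riemannian-gradient error bound of \Cref{lemma:EB_grad}. The bridge is an orthogonal decomposition of $\mathcal{A}_\alpha(\bX)$ into its tangent and normal components at $\bX$ with respect to $\St(d,K)$.

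First, I would observe that $\bm{V}_{\mathcal{A}_\alpha(\bX)} \bm{\Sigma}_{\mathcal{A}_\alpha(\bX)} \bm{V}_{\mathcal{A}_\alpha(\bX)}^\top$ is the symmetric positive semidefinite polar factor of $\mathcal{A}_\alpha(\bX)$, so the matrix $\bX \bm{V}_{\mathcal{A}_\alpha(\bX)} \bm{\Sigma}_{\mathcal{A}_\alpha(\bX)} \bm{V}_{\mathcal{A}_\alpha(\bX)}^\top$ lies in the normal space $N_\bX \St(d,K) = \{\bX \bm{S} : \bm{S} = \bm{S}^\top\}$. By the Pythagorean identity for the Euclidean-metric orthogonal decomposition $\mathcal{A}_\alpha(\bX) = P_{T_\bX \St}(\mathcal{A}_\alpha(\bX)) + P_{N_\bX \St}(\mathcal{A}_\alpha(\bX))$, the residual then dominates the tangent-space component:
\[
\rho_\alpha(\bX) = \bigl\| \mathcal{A}_\alpha(\bX) - \bX \bm{V}_{\mathcal{A}_\alpha(\bX)} \bm{\Sigma}_{\mathcal{A}_\alpha(\bX)} \bm{V}_{\mathcal{A}_\alpha(\bX)}^\top \bigr\|_F \;\geq\; \bigl\| P_{T_\bX \St}(\mathcal{A}_\alpha(\bX)) \bigr\|_F.
\]

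Next, I would compute the tangent projection explicitly. Writing $\mathcal{A}_\alpha(\bX) = \alpha \bX + \tfrac{1}{2}\nabla g(\bX)$ and using the standard formula $P_{T_\bX \St}(\bm{\xi}) = \bm{\xi} - \bX\,\sym(\bX^\top \bm{\xi})$, the key algebraic observation is that $\bX^\top(\alpha \bX) = \alpha \bm{I}_K$ is already symmetric, so the $\alpha \bX$ contribution is cancelled exactly in the projection. This leaves $P_{T_\bX \St}(\mathcal{A}_\alpha(\bX)) = \tfrac{1}{2}\bigl[\nabla g(\bX) - \bX\,\sym(\bX^\top \nabla g(\bX))\bigr] = \tfrac{1}{2}\grad g(\bX)$, where the last equality is a short rewriting of the formula for $\grad g$ in \Cref{lemma:EB_grad}. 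Chaining with \Cref{lemma:EB_grad} now gives, for every $\bX \in \St(d,K)$ with $d_F(\bX,\bm{Q}) \leq \delta_3$,
\[
d_F(\bX, \bm{Q}) \;\leq\; \eta\,\|\grad g(\bX)\|_F \;\leq\; 2\eta\,\rho_\alpha(\bX),
\]
so the theorem follows with $\eta_1 = 2\eta$ and $\delta = \delta_3$.

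There is no major obstacle; the only nontrivial point is the cancellation above, which reveals that the proximal-type term $\alpha \bX$ in $\mathcal{A}_\alpha$ lives entirely in the normal space at $\bX$ and hence never enters the bound. For completeness I would also verify that a thin SVD of $\mathcal{A}_\alpha(\bX)$ is well-defined near $\bm{Q}$: since $\mathcal{A}_\alpha(\bm{Q}) = \bm{Q}(\alpha \bm{I}_K + \bm{\Theta}^2 \diag(\ba))$ has smallest singular value $\alpha + \lambda_K a_K > 0$, continuity of singular values gives $\sigma_K(\mathcal{A}_\alpha(\bX)) > 0$ throughout a small neighborhood of $\bm{Q}$, a condition that can be absorbed into the choice of $\delta$. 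Finally, the stated bound $\alpha < \lambda_K a_K - \lambda_1 a_1 \delta$ is not strictly required by the error-bound argument itself---any $\alpha \geq 0$ works---but it is inherited from the companion linear-convergence analysis of \Cref{thm:LC for PGD}, where the stepsize must be controlled for the iterates to contract.
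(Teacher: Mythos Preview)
Your argument is correct and is genuinely different from the paper's. The paper first lower-bounds $\rho_\alpha(\bX)\ge\sigma_K(\mathcal{A}_\alpha(\bX))\,\|\bX\bV_{\mathcal{A}_\alpha(\bX)}-\bU_{\mathcal{A}_\alpha(\bX)}\|_F$, then upper-bounds $\|\grad g(\bX)\|_F\le 4\|\bm{\Sigma}_{\mathcal{A}_\alpha(\bX)}\|\cdot\|\bX-\bU_{\mathcal{A}_\alpha(\bX)}\bV_{\mathcal{A}_\alpha(\bX)}^\top\|_F$ and chains with \Cref{lemma:EB_grad}; the constraint $\alpha<\lambda_Ka_K-\lambda_1a_1\delta$ enters precisely to guarantee $\sigma_K(\mathcal{A}_\alpha(\bX))>0$, and the resulting $\eta_1$ carries a condition-number factor $\sigma_1/\sigma_K$ of $\mathcal{A}_\alpha(\bX)$. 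Your tangent--normal decomposition bypasses both the SVD manipulation and the singular-value positivity step entirely: since $\bX\bV\bm{\Sigma}\bV^\top\in N_\bX\St(d,K)$ and $\alpha\bX\in N_\bX\St(d,K)$, Pythagoras gives $\rho_\alpha(\bX)\ge\|P_{T_\bX}(\mathcal{A}_\alpha(\bX))\|_F=\tfrac12\|\grad g(\bX)\|_F$ directly, yielding the sharper, $\alpha$-independent constant $\eta_1=2\eta$ and no upper restriction on $\alpha$. Your observation that the stated stepsize bound is really needed only downstream (in \Cref{prop:local_region}, via the perturbation estimate for $\PCal_{\St}$) is also accurate. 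The paper's route has the minor side benefit of making the GPM update $\bU\bV^\top$ appear explicitly, foreshadowing the safeguard estimate in \Cref{prop:convergence_prep}(c), but for the error bound itself your approach is cleaner and strictly stronger.
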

	\begin{proof}\,
		We start by providing a lower bound of $\rho_\alpha(\bX)$,
		\begin{align}
			\rho_\alpha(\bX) & = \| \bX \bV_{\mathcal{A}_{\alpha}(\bX)} \bm{\Sigma}_{\mathcal{A}_{\alpha}(\bX)} \bV_{\mathcal{A}_{\alpha}(\bX)}^\top - \mathcal{A}_{\alpha}(\bX)\|_F \nonumber \\
			& = \| \bX \bV_{\mathcal{A}_{\alpha}(\bX)}
			\bm{\Sigma}_{\mathcal{A}_{\alpha}(\bX)} \bV_{\mathcal{A}_{\alpha}(\bX)}^\top - \bU_{\mathcal{A}_{\alpha}(\bX)} \bm{\Sigma}_{\mathcal{A}_{\alpha}(\bX)} \bV_{\mathcal{A}_{\alpha}(\bX)}^\top\|_F \nonumber \\
			& = \| \bX \bV_{\mathcal{A}_{\alpha}(\bX)} \bm{\Sigma}_{\mathcal{A}_{\alpha}(\bX)}  - \bU_{\mathcal{A}_{\alpha}(\bX)} \bm{\Sigma}_{\mathcal{A}_{\alpha}(\bX)} \|_F \nonumber\\
			& \geq  \sigma_K\left( \mathcal{A}_{\alpha}(\bX)\right) \| \bX \bV_{\mathcal{A}_{\alpha}(\bX)} - \bU_{\mathcal{A}_{\alpha}(\bX)}\|_F,  \label{eq: rhogeq}
		\end{align}
		where $\bU_{\mathcal{A}_{\alpha}(\bX)} \in \mathbb{R}^{d \times K} $ is the left singular vectors of $\mathcal{A}_{\alpha}(\bX)$ and $\sigma_K\left( \mathcal{A}_{\alpha}(\bX)\right)$ is the $K$-th largest singular value of $\mathcal{A}_{\alpha}(\bX)$. We claim $\sigma_K\left( \mathcal{A}_{\alpha}(\bX)\right) > 0 $. This can be shown as follows.
		
		Denote the $K$-th largest singular value of $\bQ \bm{\Theta}^2 \bQ^\top \bX \diag(\ba)$ by $\bar{\sigma}_K$ and assume $d_F(\bX,\bQ) = \|\bX-\bQ \|_F$. We obtain
		\begin{align*}
			\bar{\sigma}_K & \geq \sigma_K( \bQ \bm{\Theta}^2 \diag(\ba)) - \| \bQ \bm{\Theta}^2 \bQ^\top (\bX - \bQ) \diag(\ba) \|\\
			& \geq \lambda_K a_K - \lambda_1 a_1 \| \bX - \bQ\| \geq \lambda_K a_K - \lambda_1 a_1 \| \bX - \bQ\|_F \\
			& \geq \lambda_K a_K - \lambda_1 a_1 \delta >0,
		\end{align*}
		where the first inequality comes from Weyl's inequality and the last is due to our assumption on the stepsize $\alpha$. It follows that 
		\begin{align} \label{eq: sigmaK >0}
			\sigma_K \left( \mathcal{A}_{\alpha}(\bX)\right)  \geq \bar{\sigma}_K - \| \alpha \bX \| \geq   \lambda_K a_K - \lambda_1 a_1 \delta - \alpha >0.
		\end{align}
		Next, according to \cite[Proposition 2]{liu2019quadratic}, we know 
		\begin{align*}
			\| \grad g(\bX) \| _F \leq \| \nabla g(\bX) - \bX \nabla g(\bX)^\top \bX \|_F.
		\end{align*}
		In addition, observe that
		\begin{align*}
			\frac{1}{2}\| \grad g(\bX) \|_F
			& \leq \frac{1}{2} \| \nabla g(\bX) - \bX \nabla g(\bX)^\top \bX \|_F \\
			& = \| \frac{1}{2}\nabla g(\bX) + \alpha \bX  - \bX (\frac{1}{2}\nabla g(\bX)^\top + \alpha \bX ^ \top)\bX\|_F \\
			& = \| \bU_{\mathcal{A}_{\alpha}(\bX)} \bm{\Sigma}_{\mathcal{A}_{\alpha}(\bX)} \bV_{\mathcal{A}_{\alpha}(\bX)}^\top - \bX \bV_{\mathcal{A}_{\alpha}(\bX)} \bm{\Sigma}_{\mathcal{A}_{\alpha}(\bX)} \bU_{\mathcal{A}_{\alpha}(\bX)}^\top \bX\|_F \\
			& \le \|  (\bX - \bU_{\mathcal{A}_{\alpha}(\bX)} \bV_{\mathcal{A}_{\alpha}(\bX)}^\top) \bV_{\mathcal{A}_{\alpha}(\bX)} \bm{\Sigma}_{\mathcal{A}_{\alpha}(\bX)} \bU_{\mathcal{A}_{\alpha}(\bX)}^\top \bX \|_F \\ & \quad + \| \bU_{\mathcal{A}_{\alpha}(\bX)} \bm{\Sigma}_{\mathcal{A}_{\alpha}(\bX)} \bU_{\mathcal{A}_{\alpha}(\bX)}^\top (\bX - \bU_{\mathcal{A}_{\alpha}(\bX)} \bV_{\mathcal{A}_{\alpha}(\bX)}^\top )                    \|_F \\
			& \le \|\bX- \bU_{\mathcal{A}_{\alpha}(\bX)}\bV_{\mathcal{A}_{\alpha}(\bX)}^\top   \|_F \cdot \left(\| \bV_{\mathcal{A}_{\alpha}(\bX)} \bm{\Sigma}_{\mathcal{A}_{\alpha}(\bX)} \bU^\top_{\mathcal{A}_{\alpha}(\bX)} \bX\| + \|\bU_{\mathcal{A}_{\alpha}(\bX)} \bm{\Sigma}_{\mathcal{A}_{\alpha}(\bX)} \bU^\top_{\mathcal{A}_{\alpha}(\bX)} \|\right) \\
			& \leq 2 \| \bm{\Sigma}_{\mathcal{A}_{\alpha}(\bX)} \| \cdot \|\bX- \bU_{\mathcal{A}_{\alpha}(\bX)}\bV_{\mathcal{A}_{\alpha}(\bX)}^\top \|_F,
		\end{align*}
		where the second equality is due to $\mathcal{A}_{\alpha}(\bX) = \frac{1}{2}\nabla g(\bX) + \alpha \bX$. This, together with Fact \ref{lemma:EB_grad} and \eqref{eq: rhogeq}, yields that for all $\bX \in \St(d,K)$ with $d_F(\bX,\bQ)\leq \delta$ (note $\delta \leq \delta_3$ and $\lambda_K a_K - \lambda_1 a_1 \delta - \alpha >0$), 
		\begin{align*}
			d_F(\bX,\bQ) & \leq \eta \cdot \| \grad g(\bX) \|_F \leq 4 \eta \cdot \| \bm{\Sigma}_{\mathcal{A}_{\alpha}(\bX)} \| \cdot \|\bX- \bU_{\mathcal{A}_{\alpha}(\bX)}\bV_{\mathcal{A}_{\alpha}(\bX)}^\top \|_F \\
			& \leq \frac{4 \eta}{\sigma_K\left( \mathcal{A}_{\alpha}(\bX)\right)} \cdot \| \bm{\Sigma}_{\mathcal{A}_{\alpha}(\bX)} \| \cdot \rho_\alpha(\bX).
		\end{align*}
		By letting $\eta_1 = \max_{ \bX \in \St(d,K), d_F(\bX,\bQ)\leq \delta} \left\{ \frac{4 \eta}{\sigma_K\left( \mathcal{A}_{\alpha}(\bX)\right)} \cdot \| \bm{\Sigma}_{\mathcal{A}_{\alpha}(\bX)} \| \right\} $, we readily obtain the desired result.
	\end{proof}
	We remark that error bounds have long played a critical role in analyzing the convergence rate of iterative algorithms; see, e.g., \cite{luo1993error,zhou2017unified,li2018calculus} and the references therein. Our established error bound result expands the repertoire of existing error bound results in \cite{liu2017estimation,liu2019quadratic,wang2021linear,zhu2021orthogonal,chen2021local} for non-convex (manifold) optimization problems. It is well worth pointing out that the existing error bound result in Fact \ref{lemma:EB_grad} uses the Frobenius norm of the Riemannian gradient (i.e., $\|\grad g(\bX)\|_F$) as a residual measure, which is then used to analyze the convergence rate of the Riemannian gradient descent method \cite{liu2019quadratic}. On the contrary, we aim to analyze the convergence rate of \eqref{update:gpm-qpoc}, so that the new residual measure $\rho_\alpha(\bX)$ associated with fixed points is required.
	
	Now, let us proceed to the second part of the proof of Theorem \ref{thm:LC for PGD}, which elucidates the following key properties of \eqref{update:gpm-qpoc} for problem \eqref{P:min g}.
	\begin{proposition}\label{prop:convergence_prep}
		Consider problem \eqref{P:min g}. The sequence of iterates $\{\bX^t\}_{t\geq 0}$ generated by \eqref{update:gpm-qpoc} with $\alpha >0$ satisfies:\\
		{\rm (a)} (\textbf{Sufficient ascent}) $g(\bX^{t+1}) - g(\bX^{t}) \geq \alpha \cdot \| \bX^t - \bX^{t+1}\|_F^2$,\\
		{\rm (b)} (\textbf{Cost-to-go estimate}) $g(\bQ) - g(\bX^t) \leq \beta_3 \cdot d_F^2(\bX^t,\bQ)$,\\
		{\rm (c)} (\textbf{Safeguard}) $\rho_\alpha(\bX^t) \leq \beta_4 \cdot\| \bX^{t+1} - \bX^t \|_F$, \\
		where $\beta_3>0$ and $\beta_4>0$ are some constants.
	\end{proposition}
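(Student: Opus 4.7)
The plan is to prove the three parts via direct algebraic manipulation exploiting the quadratic structure of $g$ and the first-order optimality characterization of the projection onto $\St(d,K)$. The key observation that unlocks parts (a) and (c) is that $\bX^{t+1} \in \PCal_{\St}(\mathcal{A}_\alpha(\bX^t))$ is equivalent to
\[
\bX^{t+1} \in \argmax_{\bm{Y} \in \St(d,K)} \langle \bm{Y}, \mathcal{A}_\alpha(\bX^t)\rangle,
\]
since $\|\bm{Y}\|_F^2 = K$ is constant over the feasible set. This yields $\langle \bX^{t+1} - \bX^t, \mathcal{A}_\alpha(\bX^t)\rangle \geq 0$. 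Combined with $\mathcal{A}_\alpha(\bX^t) = \alpha \bX^t + \frac{1}{2}\nabla g(\bX^t)$ and the identity $\langle \bX^{t+1} - \bX^t, \bX^t\rangle = -\frac{1}{2}\|\bX^{t+1} - \bX^t\|_F^2$ (which follows from $\|\bX^t\|_F^2 = \|\bX^{t+1}\|_F^2 = K$), this delivers the intermediate inequality $\langle \nabla g(\bX^t), \bX^{t+1} - \bX^t\rangle \geq \alpha \|\bX^{t+1} - \bX^t\|_F^2$.

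For part (a), because $g$ is quadratic with $\nabla g(\bX) = 2\bQ\bm{\Theta}^2\bQ^\top \bX \diag(\ba)$, its second-order expansion about $\bX^t$ is exact:
\[
g(\bX^{t+1}) - g(\bX^t) = \langle \nabla g(\bX^t), \bX^{t+1} - \bX^t\rangle + \tr\bigl((\bX^{t+1} - \bX^t)^\top \bQ \bm{\Theta}^2 \bQ^\top (\bX^{t+1} - \bX^t)\diag(\ba)\bigr).
\]
The quadratic remainder is nonnegative because $\bQ \bm{\Theta}^2 \bQ^\top \succeq \bm{0}$ and $\diag(\ba) \succ \bm{0}$, so combining with the intermediate inequality above yields the sufficient ascent.

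For part (b), I would fix the sign vector $\bq \in \{1,-1\}^K$ realizing $d_F(\bX^t, \bQ)$, set $\bm{\Delta} = \bX^t - \bQ\diag(\bq)$, and expand $g(\bQ\diag(\bq)) - g(\bX^t)$ in the variable $\bm{E} = (\bQ\diag(\bq))^\top \bm{\Delta}$. The Stiefel constraint $(\bX^t)^\top \bX^t = \bI_K$ forces $\bm{E} + \bm{E}^\top = -\bm{\Delta}^\top \bm{\Delta}$, whose diagonal entries give $E_{kk} = -\frac{1}{2}\|\bm{\Delta}_k\|^2$ with $\bm{\Delta}_k$ the $k$-th column of $\bm{\Delta}$. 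Substituting this into the expansion cancels the would-be linear contribution and leaves
\[
g(\bQ\diag(\bq)) - g(\bX^t) = \sum_{k=1}^K \lambda_k a_k\|\bm{\Delta}_k\|^2 - \|\bm{\Theta}\bm{E}\diag(\ba)^{1/2}\|_F^2 \leq \lambda_1 a_1\cdot d_F^2(\bX^t, \bQ),
\]
so $\beta_3 = \lambda_1 a_1$ suffices. Notably this bound holds globally on $\St(d,K)$ rather than merely near $\bQ$.

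For part (c), let $\mathcal{A}_\alpha(\bX^t) = \bU\bm{\Sigma}\bV^\top$ be a thin SVD (so $\bV$ is $K \times K$ orthogonal and $\bX^{t+1} = \bU\bV^\top$). Direct rearrangement gives
\[
\rho_\alpha(\bX^t) = \|(\bX^t\bV - \bU)\bm{\Sigma}\bV^\top\|_F \leq \sigma_1(\mathcal{A}_\alpha(\bX^t))\cdot\|\bX^t\bV - \bU\|_F = \sigma_1(\mathcal{A}_\alpha(\bX^t))\cdot\|\bX^t - \bX^{t+1}\|_F,
\]
the last equality because right-multiplication by the orthogonal $\bV$ is a Frobenius-norm isometry. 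A uniform bound $\sigma_1(\mathcal{A}_\alpha(\bX)) \leq \alpha + \lambda_1 a_1$ on $\St(d,K)$ follows from the triangle inequality and $\|\bX\| = 1$, so $\beta_4 = \alpha + \lambda_1 a_1$ works. The main bookkeeping burden across all three parts is carefully tracking the optimal sign $\bq$ inside $d_F$ and cleanly separating what is genuinely quadratic in $\bm{\Delta}$ from what cancels by the Stiefel constraint; beyond that, no deep new idea is required, as all three claims reduce to the quadratic nature of $g$, the projection optimality characterization, and standard norm inequalities.
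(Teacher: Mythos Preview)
Your proof is correct and for parts (a) and (c) follows essentially the same route as the paper: both use the optimality of the projection $\bX^{t+1}\in\argmax_{\bm{Y}\in\St(d,K)}\langle \bm{Y},\mathcal{A}_\alpha(\bX^t)\rangle$ together with the exact quadratic expansion of $g$ for (a), and the factorization $\rho_\alpha(\bX^t)=\|(\bX^t-\bX^{t+1})\bV\bm{\Sigma}\bV^\top\|_F$ for (c). The one substantive difference is in part (b): the paper simply invokes an external reference (Proposition~9 of \cite{liu2019quadratic}) for the cost-to-go estimate, whereas you carry out the computation directly and obtain the explicit constant $\beta_3=\lambda_1 a_1$; your argument via $E_{kk}=-\tfrac12\|\bm{\Delta}_k\|^2$ from the Stiefel constraint is clean and yields a global (not merely local) bound, which is a mild improvement over a black-box citation.
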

	
	\begin{proof}\,
		We first prove the sufficient ascent property. Observe that
		\begin{align*}
			& \quad g(\bX^{t+1}) - g(\bX^{t}) 
			\\
			& = \tr\left( (\bX^{t+1})^{\top}\bm{Q} \bm{\Theta}^2 \bm{Q}^\top \bX^{t+1}\diag(\ba) \right)-\tr\left( (\bX^{t})^{\top}\bm{Q} \bm{\Theta}^2 \bm{Q}^\top \bX^{t}\diag(\ba) \right) 
			\\
			&=  \tr\left( (\bX^{t+1}-\bX^{t})^{\top}\bm{Q} \bm{\Theta}^2 \bm{Q}^\top (\bX^{t+1}-\bX^{t})\diag(\ba) \right) + \tr\left((\bX^{t+1}-\bX^{t})^{\top} \alpha (\bX^{t+1}-\bX^{t}) \right) \\
			& \quad + 2 \cdot \tr \left( (\bX^{t+1})^{\top}\bm{Q} \bm{\Theta}^2 \bm{Q}^\top \bX^{t}\diag(\ba) \right)-2 \cdot \tr \left( (\bX^{t})^{\top}\bm{Q} \bm{\Theta}^2 \bm{Q}^\top 	\bX^{t}\diag(\ba) \right) \\
			&\quad + 2 \cdot \tr \left((\bX^{t+1})^{\top} \alpha \bX ^{t} \right) - 2 \cdot \tr \left((\bX^{t})^{\top} \alpha \bX ^{t} \right).
		\end{align*}
		By the update of \eqref{update:gpm-qpoc}, it holds that
		\begin{align*}
			& \quad 2 \cdot \tr \left( (\bX^{t+1})^{\top}\bm{Q} \bm{\Theta}^2 \bm{Q}^\top \bX^{t}\diag(\ba) \right)-2 \cdot \tr \left( (\bX^{t})^{\top}\bm{Q} \bm{\Theta}^2 \bm{Q}^\top \bX^{t}\diag(\ba) \right)\\
			& \ge 2 \cdot \tr \left( (\bX^{t})^{\top} \alpha \bX ^{t} \right) - 2 \cdot \tr \left( (\bX^{t+1})^{\top} \alpha \bX ^{t} \right).
		\end{align*}
		Hence, we have
		\begin{align*}
			g(\bX^{t+1}) - g(\bX^{t}) & \geq  \tr\left( (\bX^{t+1}-\bX^{t})^{\top}\bm{Q} \bm{\Theta}^2 \bm{Q}^\top (\bX^{t+1}-\bX^{t})\diag(\ba) \right)\\
			& \quad + \tr\left((\bX^{t+1}-\bX^{t})^{\top} \alpha (\bX^{t+1}-\bX^{t}) \right) \\
			& \geq \alpha \| \bX^t - \bX^{t+1}\|_F^2.
		\end{align*}
		By \cite[Proposition 9]{liu2019quadratic}, the cost-to-go estimate holds.
		We next consider the property of safeguard,
		\begin{align*}
			\rho_\alpha(\bX^t) &= \| \bX^t \bV_{\mathcal{A}_{\alpha}(\bX^t)} \bm{\Sigma}_{\mathcal{A}_{\alpha}(\bX^t)} \bV_{\mathcal{A}_{\alpha}(\bX^t)}^\top - \mathcal{A}_{\alpha}(\bX^t)\|_F \\
			&= \| \bX^t \bV_{\mathcal{A}_{\alpha}(\bX^t)} \bm{\Sigma}_{\mathcal{A}_{\alpha}(\bX^t)} \bV_{\mathcal{A}_{\alpha}(\bX^t)}^\top - \bX^{t+1} \bV_{\mathcal{A}_{\alpha}(\bX^t)} \bm{\Sigma}_{\mathcal{A}_{\alpha}(\bX^t)} \bV_{\mathcal{A}_{\alpha}(\bX^t)}^\top  \|_F \\
			&= \| \left(\bX^t - \bX^{t+1} \right)\bV_{\mathcal{A}_{\alpha}(\bX^t)} \bm{\Sigma}_{\mathcal{A}_{\alpha}(\bX^t)} \bV_{\mathcal{A}_{\alpha}(\bX^t)}^\top   \|_F \\
			&\leq \| \bV_{\mathcal{A}_{\alpha}(\bX^t)} \bm{\Sigma}_{\mathcal{A}_{\alpha}(\bX^t)} \bV_{\mathcal{A}_{\alpha}(\bX^t)}^\top  \| \cdot \| 
			\bX^t - \bX^{t+1}\|_F.
		\end{align*}
		The proof is complete.
	\end{proof}
	
	We remark that in Proposition \ref{prop:convergence_prep}, the stepsize $\alpha >0$ is required to ensure the sufficient ascent property, which is slightly different from the requirement $\alpha \ge 0$ in Theorem \ref{thm: gpmeb}. Note that the local error bound result in Theorem \ref{thm: gpmeb} only holds in a local region. To be able to analyze the whole sequence of iterates generated by \eqref{update:gpm-qpoc} for problem \eqref{P:min g} and subsequently the whole sequence of iterates generated by Algorithm \ref{alg:PGD_al}, we need to show that the iterates always stay in such a local region. The following result constitutes the third part of the proof of Theorem \ref{thm:LC for PGD}.
	\begin{proposition}[\textbf{The iterates stay in a local region}]\label{prop:local_region}
		Suppose that \\
		{\rm (a)} (\textbf{Initialization}) $g(\bQ) - g(\bX^0) \leq \delta_1$, where $\delta_1 = \delta^{2} \bar{\eta}$ and $\delta,\bar{\eta}$ are defined in Theorem \ref{thm: gpmeb} and Corollary \ref{cor: gqg}, respectively; \\
		{\rm (b)} (\textbf{Residual setting}) $\sum_{k=1}^{K} \| \Delta_k\| \leq \frac{2 c (1-\gamma) \delta_1 }{2 L_g + (1-\gamma) \delta_1 } (< 2c)$, where $c = \lambda_K a_K - \lambda_1 a_1 \delta - \alpha > 0$ and $L_g $ is the Lipschitz constant of $g$ over the compact Stiefel manifold.\\
		Then, the sequence $\{\bX^t\}_{t \geq 0}$  generated by Algorithm \ref{alg:PGD_al} with $\alpha >0$ satisfies $ d_F(\bX^{t},\bQ) \leq \delta $.
	\end{proposition}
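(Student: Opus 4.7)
The plan is induction on $t$ with the invariant $g(\bQ)-g(\bX^t)\leq\delta_1$; since $\delta_1=\delta^2\bar\eta$, Corollary~\ref{cor: gqg} immediately upgrades this to $d_F(\bX^t,\bQ)\leq\delta$, which is exactly the conclusion we want. The base case $t=0$ is assumption (a) followed by one application of Corollary~\ref{cor: gqg}.

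For the inductive step I would introduce the ``ghost'' iterate $\hat{\bX}^{t+1}\in\PCal_{\rm St}(\mathcal{A}_\alpha(\bX^t))$, i.e., a single step of GPM-QPOC on $g$ starting from the current $\bX^t$ and \emph{ignoring} the residual. Because $g(\bQ)-g(\bX^t)\leq\delta_1$, the per-step contraction in Theorem~\ref{thm:LC for PGD} applied to this one step yields $g(\bQ)-g(\hat{\bX}^{t+1})\leq\gamma\,(g(\bQ)-g(\bX^t))\leq\gamma\delta_1$. The actual iterate of Algorithm~\ref{alg:PGD_al} is $\bX^{t+1}\in\PCal_{\rm St}(\mathcal{A}_\alpha(\bX^t)+\bm{E}^t)$ with $\bm{E}^t=[\Delta_1\vvx_1^t,\ldots,\Delta_K\vvx_K^t]$ via the population--residual decomposition (Lemma~\ref{lemma: snd}), so $\|\bm{E}^t\|_F\leq\sum_k\|\Delta_k\|$. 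The heart of the argument is a polar-factor / Stiefel-projection perturbation bound of the form $\|\PCal_{\rm St}(\bA_1)-\PCal_{\rm St}(\bA_2)\|_F\leq 2\|\bA_1-\bA_2\|_F/(\sigma_K(\bA_1)+\sigma_K(\bA_2))$, valid whenever both arguments have full column rank. The lower bound $\sigma_K(\mathcal{A}_\alpha(\bX^t))\geq c$ was already derived \emph{inside} the proof of Theorem~\ref{thm: gpmeb} under exactly the hypothesis $d_F(\bX^t,\bQ)\leq\delta$ maintained by the induction; Weyl's inequality then yields $\sigma_K(\mathcal{A}_\alpha(\bX^t)+\bm{E}^t)\geq c-\sum_k\|\Delta_k\|$, which remains positive thanks to the ``$<2c$'' parenthetical in (b). This gives $\|\hat{\bX}^{t+1}-\bX^{t+1}\|_F\leq 2\sum_k\|\Delta_k\|/(2c-\sum_k\|\Delta_k\|)$.

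Combining with the Lipschitz continuity of $g$ on the compact Stiefel manifold yields $g(\bQ)-g(\bX^{t+1})\leq\gamma\delta_1 + 2L_g\sum_k\|\Delta_k\|/(2c-\sum_k\|\Delta_k\|)$. A direct algebraic rearrangement shows that the precise threshold in condition (b) is exactly the one that forces the second summand to be at most $(1-\gamma)\delta_1$, so the invariant is preserved: $g(\bQ)-g(\bX^{t+1})\leq\delta_1$, and a final use of Corollary~\ref{cor: gqg} delivers $d_F(\bX^{t+1},\bQ)\leq\delta$, closing the induction.

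The main obstacle is the Stiefel-projection perturbation estimate: beyond carefully proving or invoking the $2/(\sigma_K+\sigma_K)$ polar-factor bound, one must certify that both matrices being projected retain full column rank along the entire trajectory---this is exactly the role of the ``$<2c$'' annotation in (b), which keeps the denominator strictly positive---and one must ensure that the lower bound $\sigma_K(\mathcal{A}_\alpha(\bX^t))\geq c$ used inside Theorem~\ref{thm: gpmeb} transfers cleanly to the Algorithm~\ref{alg:PGD_al} trajectory, which is precisely what the induction invariant provides. A careful bookkeeping of these constants is what produces the clean algebraic threshold appearing in condition (b).
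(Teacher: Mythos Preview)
Your proposal is correct and matches the paper's proof essentially step for step: induction on the invariant $g(\bQ)-g(\bX^t)\le\delta_1$, a ghost GPM-QPOC iterate, the polar-factor perturbation bound of Li (which is exactly the paper's citation of \cite[Theorem 2.4]{li2002perturbation}), and the algebraic check that condition (b) makes the residual term at most $(1-\gamma)\delta_1$. One small caveat: invoking Theorem~\ref{thm:LC for PGD} for the one-step contraction is circular in the paper's logical order (its proof \emph{uses} Proposition~\ref{prop:local_region}); the paper instead derives that contraction inline from Proposition~\ref{prop:convergence_prep} and Theorem~\ref{thm: gpmeb}, and you should do the same.
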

	\begin{proof}\,
		The result will be shown by induction. When $t=0$, we know by Corollary \eqref{cor: gqg} and the assumption $g(\bQ) - g(\bX^0) \leq \delta_1$  that
		\begin{align}
			d_F(\bX^0,\bQ) \leq \sqrt{\left( g(\bQ) - g(\bX^0) \right)/\bar{\eta}} \leq \sqrt{\delta_1/\bar{\eta} } = \delta.
			\label{eq:induction_ball}
		\end{align}
		Next, we assume $g(\bQ) - g(\bX^ {t}) \leq \delta_1 $ for some $t \geq 0$, which implies $d_F(\bX^t,\bQ) \leq \delta$. According to the iterative procedures in \eqref{update:gpm-qpoc} and Algorithm \ref{alg:PGD_al}, we denote
		\begin{equation*}
			\bar{\bX}^{t+1} \in  \PCal_{\St}\left(\alpha \bX^{t}+ \bm{Q} \bm{\Theta}^2 \bm{Q}^\top \bX^t\diag(\ba) \right), \bX^{t+1} \in\PCal_{\rm St}\left( \alpha \bm{X}^t +\left[\bM_1 \vvx_1^t, \dots, \bM_K \vvx_K^t \right]\right).
		\end{equation*}
		By Proposition \ref{prop:convergence_prep}, Theorem \ref{thm: gpmeb}, and the fact $d_F(\bX^t,\bQ) \leq \delta$, it follows that
		\begin{align}
			g(\bQ) - g(\bX^t) & \leq \beta_3 \cdot d_F^2(\bX^t,\bQ)  \leq \eta_1 ^{2} \beta_3 \cdot \left(\rho_\alpha(\bX^t) \right)^{2} \nonumber \\
			& \leq \eta_1 ^{2} \beta_4 ^{2} \beta_3 \cdot \| \bar{\bX}^{t+1} - \bX^t \|_F ^{2} \leq \frac{\eta_1 ^{2} \beta_4 ^{2} \beta_3}{\alpha} \cdot \left(g(\bar{\bX}^{t+1}) - g(\bX^{t})\right).
			\label{eq: convergence_ineq}
		\end{align}
		Hence, we have
		\begin{align}
			g(\bQ) - g(\bar{\bX}^ {t+1}) & = g(\bQ) - g(\bX^t) - \left(g(\bar{\bX}^{t+1}) - g(\bX^{t}) \right) \nonumber\\
			& \leq \left(\frac{\eta_1 ^{2} \beta_4 ^{2} \beta_3}{\alpha}-1 \right) \left(g(\bar{\bX}^{t+1}) - g(\bQ) + g(\bQ) - g(\bX^{t})\right).
			\label{eq:convergence_1}
		\end{align}
		Since $g(\bQ)\geq g(\bX^{t})$  for all $t \geq 0$, without loss of generality we assume $a'=\frac{\eta_1 ^{2} \beta_4 ^{2} \beta_3}{\alpha}>1$. Then, from \eqref{eq:convergence_1}, one has
		\begin{align}
			g(\bQ) - g(\bar{\bX}^ {t+1}) \leq \frac{a'-1}{a'} \left(g(\bQ) - g(\bX^ {t})  \right),
			\label{eq: linear_convergence_0}
		\end{align}
		which implies
		\begin{align*}
			g(\bQ) - g( \bar{\bX}^ {t+1}) \leq  \left(g(\bQ) - g(\bX^ {t})  \right) \cdot \gamma \leq \gamma \cdot \delta_1  \text{ with }  \gamma=\frac{a'-1}{a'} \in (0,1).
		\end{align*}
		Note that
		\begin{align*}
			|g(\bar{\bX}^{t+1}) - g(\bX^{t+1})| &   \leq L_g \| \bar{\bX}^{t+1} - \bX^{t+1} \|_F \\
			& \leq 2 L_g \frac{\| \left[\Delta_1 \vvx_1^t, \dots, \Delta_K \vvx_K^t \right] \|_F}{ 2 (\lambda_K a_K - \lambda_1 a_1 \delta - \alpha) - \| \left[\Delta_1 \vvx_1^t, \dots, \Delta_K \vvx_K^t \right] \| },
		\end{align*}
		where $L_g >0$ is the Lipschitz constant of $g$ over the Stiefel manifold and the second inequality is due to the perturbation result in \cite[Theorem 2.4]{li2002perturbation} and \eqref{eq: sigmaK >0}. \\
		By the assumption {\rm (b)}, we have
		\begin{align*}
			|g(\bar{\bX}^{t+1}) - g(\bX^{t+1})|  &  \leq  2 L_g \frac{\| \left[\Delta_1 \vvx_1^t, \dots, \Delta_K \vvx_K^t \right] \|_F}{ 2 (\lambda_K a_K - \lambda_1 a_1 \delta - \alpha) - \| \left[\Delta_1 \vvx_1^t, \dots, \Delta_K \vvx_K^t \right] \| } \\
			& \leq   2 L_g \frac{\sum_{k=1}^{K} \| \Delta_k\|}{ 2 (\lambda_K a_K - \lambda_1 a_1 \delta - \alpha) - \sum_{k=1}^{K} \| \Delta_k\| }\\
			& \leq   \left( 1-\gamma \right) \cdot \delta_1,
		\end{align*}
		where the second inequality is due to 
		\begin{align*}
			\| \left[\Delta_1 \vvx_1^t, \dots, \Delta_K \vvx_K^t \right] \|_F^2 \leq \sum_{k=1}^K \| \Delta_k \vvx_k^t\|_2^2 \leq \sum_{k=1}^K \| \Delta_k \|^2 \leq (\sum_{k=1}^{K} \| \Delta_k\| )^2
		\end{align*}
		and 
		\begin{align*}
			\| \left[\Delta_1 \vvx_1^t, \dots, \Delta_K \vvx_K^t \right] \| \leq \| \left[\Delta_1 \vvx_1^t, \dots, \Delta_K \vvx_K^t \right] \|_F \leq  \sum_{k=1}^{K} \| \Delta_k\|.
		\end{align*}
		It follows that
		\begin{align*}
			g(\bQ) - g( \bX^ {t+1}) & \leq g(\bQ) - g( \bar{\bX}^ {t+1}) + |g(\bar{\bX}^{t+1}) - g(\bX^{t+1})| \\
			& \leq \gamma \cdot \delta_1 + \left(1-\gamma \right)\cdot \delta_1 = \delta_1.
		\end{align*}
		Then, through similar derivations in \eqref{eq:induction_ball}, we have
		\begin{align*}
			d_F(\bX^{t+1},\bQ) \leq \delta.
		\end{align*}
		This completes the proof.
	\end{proof}
	With \Cref{thm: gpmeb}, \Cref{prop:convergence_prep}, and \Cref{prop:local_region} at our disposal, we now present the proof of \Cref{thm:LC for PGD}.
	\begin{proof}\,
		Note that \eqref{eq: linear_convergence_0} implies
		$g(\bQ) - g(\bX^ {t}) \leq \left(g(\bQ) - g(\bX^ {0})\right) (\gamma)^t$ with $\gamma=\frac{a'-1}{a'} \in (0,1)$.
		Furthermore, from \eqref{eq: convergence_ineq} we know \begin{align*}
			d_F^2(\bX^t,\bQ) \leq  \frac{\eta_1 ^{2} \beta_4 ^{2}}{\alpha} \cdot \left(g(\bQ) - g(\bX^{t})\right)  \leq \frac{\eta_1 ^{2} \beta_4 ^{2}}{\alpha} \cdot \left(g(\bQ) - g(\bX^ {0})\right) \gamma^t,
		\end{align*}
		which gives
		\begin{align*}
			d_F(\bX^t,\bQ) \leq \frac{\eta_1  \beta_4 }{\alpha^{1/2}} \cdot \left(g(\bQ) - g(\bX^ {0})\right)^{1/2} (\sqrt{\gamma})^{t}.
		\end{align*}
		This completes the proof.
	\end{proof}	
	
	\subsection{Estimation performance of GPM in Algorithm \ref{alg:PGD_al}}
	Based on the above analysis of the \emph{population part} \eqref{P:min g}) and the update \eqref{update:gpm-qpoc}, we are ready to establish the estimation performance of GPM in Algorithm \ref{alg:PGD_al} for the HPPCA problem.
	\begin{theorem}[\textbf{Estimation performance}]
		Suppose that the following two conditions hold:\\
		{\rm (a)} (\textbf{Initialization}) $g(\bQ) - g(\bX^0) \leq \delta_1$, where $\delta_1 = \delta^{2} \bar{\eta}$, $\delta,\bar{\eta}$ are two constants defined in Theorem \ref{thm: gpmeb} and Corollary \ref{cor: gqg}, respectively, and the population part function $g$ is defined in \eqref{eq: proheppcasnr}; \\
		{\rm (b)} (\textbf{Residual setting}) $\sum_{k=1}^{K} \| \Delta_k\| \leq \frac{2 c (1-\gamma) \delta_1 }{2 L_g + (1-\gamma) \delta_1 } (< 2c)$, where $c = \lambda_K a_K - \lambda_1 a_1 \delta - \alpha > 0$, $\gamma \in (0,1)$ is a constant defined in Theorem \ref{thm:LC for PGD}, and $L_g $ is the Lipschitz constant of $g$ over $\St(d,K)$.\\
		Then, the iterates generated by Algorithm \ref{alg:PGD_al} with $\alpha>0$ approach the ground truth $\bQ$ at a geometric rate up to a certain threshold, namely,
		\begin{align} \label{eq: estimationperformance-lasttheorme}
			d_F^2(\bX^{t+1},\bQ) \leq \frac{1}{\bar{\eta}} \cdot \left(g(\bQ) - g(\bX^0)\right) \cdot \gamma^{t+1} + \frac{\delta_2}{(1-\gamma)\bar{\eta}},
		\end{align}
		where $\gamma \in (0,1)$ and $\delta_2 = \frac{ 2 L_g \cdot \sum_{k=1}^{K} \| \Delta_k\|}{ 2 (\lambda_K a_K - \lambda_1 a_1 \delta - \alpha) - \sum_{k=1}^{K} \| \Delta_k\| }$ is related to the residual part in \eqref{eq: proheppcasnr}.
		\label{thm: est_performace}
	\end{theorem}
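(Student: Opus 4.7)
The plan is to combine the linear convergence analysis of GPM applied to the population problem \eqref{P:min g} with a perturbation argument that controls the error introduced by the residual terms $\{\Delta_k\}_{k=1}^K$, and then to pass from function-value decrease to $d_F^2$-decrease via the global quadratic growth property (\Cref{cor: gqg}). Much of the groundwork has already been assembled inside the proof of \Cref{prop:local_region}; the task here is mainly to reorganize its one-step estimates into a clean geometric-plus-residual bound valid for all iterations $t$.

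First, I would invoke \Cref{prop:local_region} to guarantee that, under hypotheses (a) and (b), every iterate $\bX^t$ produced by \Cref{alg:PGD_al} satisfies $d_F(\bX^t,\bQ) \leq \delta$. This traps the trajectory in the region where the local error bound (\Cref{thm: gpmeb}), the cost-to-go and safeguard estimates of \Cref{prop:convergence_prep}, and the singular-value lower bound $\sigma_K(\mathcal{A}_\alpha(\bX^t)) \geq c := \lambda_K a_K - \lambda_1 a_1 \delta - \alpha > 0$ from \eqref{eq: sigmaK >0} all hold uniformly in $t$. For the auxiliary population-part iterate $\bar{\bX}^{t+1} \in \PCal_{\St}(\mathcal{A}_\alpha(\bX^t))$, the chain of inequalities \eqref{eq: convergence_ineq} then yields the one-step geometric descent $g(\bQ) - g(\bar{\bX}^{t+1}) \leq \gamma \cdot (g(\bQ) - g(\bX^t))$ with $\gamma \in (0,1)$ as in \Cref{thm:LC for PGD}.

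Second, I would control the discrepancy between $\bar{\bX}^{t+1}$ and the true iterate $\bX^{t+1}$. The two matrices being projected differ precisely by $[\Delta_1 \vvx_1^t,\dots,\Delta_K \vvx_K^t]$, so the polar perturbation bound \cite[Theorem 2.4]{li2002perturbation} together with the $K$-th singular-value lower bound above, the $L_g$-Lipschitz continuity of $g$ on the compact Stiefel manifold, and the crude estimate $\|[\Delta_1 \vvx_1^t,\dots,\Delta_K \vvx_K^t]\| \leq \sum_k \|\Delta_k\|$ delivers $|g(\bar{\bX}^{t+1}) - g(\bX^{t+1})| \leq \delta_2$, exactly as in the proof of \Cref{prop:local_region}. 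Combining the two one-step estimates produces the affine recursion $g(\bQ) - g(\bX^{t+1}) \leq \gamma (g(\bQ) - g(\bX^t)) + \delta_2$, which I would unroll via a geometric series into
\begin{align*}
g(\bQ) - g(\bX^{t+1}) \leq \gamma^{t+1} \cdot (g(\bQ) - g(\bX^0)) + \frac{\delta_2}{1-\gamma}.
\end{align*}
Dividing both sides by $\bar{\eta}$ after invoking $\bar{\eta} \cdot d_F^2(\bX^{t+1},\bQ) \leq g(\bQ) - g(\bX^{t+1})$ from \Cref{cor: gqg} yields the claimed inequality \eqref{eq: estimationperformance-lasttheorme}.

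The main technical obstacle is the perturbation step: the singular-value lower bound $\sigma_K(\mathcal{A}_\alpha(\bX^t)) \geq c$ and the denominator positivity $\sum_k \|\Delta_k\| < 2c$ both need to hold \emph{uniformly in $t$} so that Li's formula delivers a $t$-independent constant $\delta_2$ and the recursion iterates cleanly. The first requirement is exactly what hypothesis (a), together with the inductive trapping argument in \Cref{prop:local_region}, buys; the second is precisely hypothesis (b). Once these two uniform bounds are secured, everything else reduces to a routine geometric-series computation followed by a single application of quadratic growth.
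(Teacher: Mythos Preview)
Your proposal is correct and follows essentially the same approach as the paper: invoke \Cref{prop:local_region} to trap the iterates in the $\delta$-ball, combine the one-step population contraction $g(\bQ)-g(\bar{\bX}^{t+1})\le\gamma(g(\bQ)-g(\bX^t))$ with the perturbation bound $|g(\bar{\bX}^{t+1})-g(\bX^{t+1})|\le\delta_2$ from Li's polar-decomposition result, unroll the resulting affine recursion, and finish with the global quadratic growth of \Cref{cor: gqg}. The paper's proof is identical in structure and in the ingredients used.
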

	\begin{proof}\,
		Suppose that we have an iterate $\bX^t \in {\rm St}(d,K)$ and denote 
		\begin{equation*}
			\bar{\bX}^{t+1} \in  \PCal_{\St}\left(\alpha \bX^{t}+ \bm{Q} \bm{\Theta}^2 \bm{Q}^\top \bX^t\diag(\ba) \right), \bX^{t+1} \in\PCal_{\rm St}\left( \alpha \bm{X}^t +\left[\bM_1 \vvx_1^t, \dots, \bM_K \vvx_K^t \right]\right).
		\end{equation*}
		By Proposition \ref{prop:local_region}, it holds that $d_F(\bX^{t},\bQ) \leq \delta, \forall t \ge 0$.
		According to \eqref{eq: convergence_ineq}, \eqref{eq:convergence_1}, and \eqref{eq: linear_convergence_0}, we obtain 
		\begin{equation} \label{eq: ep-qpoc-linear}
			g(\bQ) - g(\bar{\bX}^{t+1}) \leq \left(g(\bQ) - g(\bX^t)\right) \cdot \gamma. 
		\end{equation}
		In addition, we estimate
		\begin{align*}
			|g(\bar{\bX}^{t+1}) - g(\bX^{t+1})| & \leq L_g \| \bar{\bX}^{t+1} - \bX^{t+1} \|_F \\
			& \leq 2 L_g \frac{\| \left[\Delta_1 \vvx_1^t, \dots, \Delta_K \vvx_K^t \right] \|_F}{ 2 (\lambda_K a_K - \lambda_1 a_1 \delta - \alpha) - \| \left[\Delta_1 \vvx_1^t, \dots, \Delta_K \vvx_K^t \right] \| } \\
			& \le 2 L_g \frac{\sum_{k=1}^{K} \| \Delta_k\|}{ 2 (\lambda_K a_K - \lambda_1 a_1 \delta - \alpha) - \sum_{k=1}^{K} \| \Delta_k\| },
		\end{align*}
		where $L_g >0$ is the Lipschitz parameter of $g$ over the compact Stiefel manifold and the second inequality is due to the perturbation result in \cite[Theorem 2.4]{li2002perturbation}.
		This, together with \eqref{eq: ep-qpoc-linear}, yields
		\begin{equation}
			g(\bQ) - g({\bX}^{t+1}) \leq \left(g(\bQ) - g(\bX^t)\right) \cdot \gamma + \delta_2,
		\end{equation}
		where $\delta_2$ is defined as $\delta_2 = \frac{ 2 L_g \cdot \sum_{k=1}^{K} \| \Delta_k\|}{ 2 (\lambda_K a_K - \lambda_1 a_1 \delta - \alpha) - \sum_{k=1}^{K} \| \Delta_k\| }$.
		By the global quadratic growth property in Corollary \ref{cor: gqg}, it follows that
		\begin{equation}
			d_F^2(\bX^{t+1},\bQ) \le \frac{1}{\bar{\eta}} \cdot  (g(\bQ) - g({\bX}^{t+1})) \leq \frac{1}{\bar{\eta}} \cdot \left(g(\bQ) - g(\bX^0)\right) \cdot \gamma^{t+1} + \delta_2 /\left((1-\gamma)\bar{\eta}\right).
		\end{equation}
		We readily obtain the desired result.
	\end{proof}

	Some comments on Theorem \ref{thm: est_performace} are in order.
	\begin{enumerate}
		\item[i)] Condition (a) requires that the initial point $\bX^0$ lies in a neighborhood of the ground truth $\bQ$. The size of the initialization region depends only on the population part \eqref{P:min g} and not on the residual part. Condition (b) imposes a bound on the residual part. It is worth noting that our results can handle problem \eqref{eq: proheppcasnr} with \emph{deterministic} $\{ \Delta_k \}_{k=1}^K$, which is much more general than the HPPCA problem with a probabilistic model.
		\item[ii)] From \eqref{eq: estimationperformance-lasttheorme}, we know that eventually, the iterates $\bX^t$ (and every accumulation point of $\{\bm{X}^t\}_{t\geq 0}$) would lie in a small neighborhood of the ground truth, the size of which depends on the residual part in \eqref{eq: proheppcasnr}.
		
		\item[iii)] There is no global optimality guarantee for Algorithm \ref{alg:PGD_al} in Theorem \ref{thm: est_performace}. Instead, since our goal in this paper is to provide a computable estimate of the unknown subspace $\bQ$ rather than to solve the challenging non-convex optimization problem \eqref{eq: proheppca} to global optimality, the estimation performance result in Theorem \ref{thm: est_performace} is good enough for such a purpose. Surely, establishing the global optimality guarantee for Algorithm \ref{alg:PGD_al} is itself an interesting theoretical problem (see \cite{zhu2021orthogonal,ling2022improved,ling2023near} for related works) and, as a byproduct, this may provide a tighter bound on $d_F(\bX^t,\bQ)$ (see the following Lemma \ref{lemma: dist_gt}). We leave this as a future research direction.
	\end{enumerate}
	\begin{lemma}[\textbf{Distance to the ground truth}]
		\label{lemma: dist_gt}
		Suppose that $\hat{\bX}$ is a global optimal solution to problem \eqref{eq: proheppcasnr}, we have
		\begin{equation}
			d_F(\hat{\bX},\bm{Q}) \leq \frac{2\sqrt{K}}{\bar{\eta} } \max_{k\in [K]} \|\Delta_k \|,
		\end{equation}
		where $\Bar{\eta} >0$  is the constant in Corollary \ref{cor: gqg}.
	\end{lemma}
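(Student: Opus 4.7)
The plan is to combine the global optimality of $\hat{\bX}$ for the perturbed problem \eqref{eq: proheppcasnr} with the global quadratic growth of the unperturbed population part \eqref{P:min g} stated in \Cref{cor: gqg}, and then bound the perturbation gap $h(\hat{\bX}) - h(\bm{Q})$ in terms of $d_F(\hat{\bX},\bm{Q})$ itself.

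First, since $\hat{\bX}$ maximizes $f = g + h$ over $\St(d,K)$ and $\bm{Q} \in \St(d,K)$ is feasible, optimality gives $f(\hat{\bX}) \ge f(\bm{Q})$. Rearranging the decomposition \eqref{eq: proheppcasnr} yields
\begin{equation*}
g(\bm{Q}) - g(\hat{\bX}) \le h(\hat{\bX}) - h(\bm{Q}).
\end{equation*}
Combining this with the global quadratic growth inequality $g(\bm{Q}) - g(\hat{\bX}) \ge \bar{\eta}\cdot d_F^2(\hat{\bX},\bm{Q})$ from \Cref{cor: gqg} produces
\begin{equation*}
\bar{\eta}\cdot d_F^2(\hat{\bX},\bm{Q}) \le h(\hat{\bX}) - h(\bm{Q}).
\end{equation*}

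Next I would bound the right-hand side. Without loss of generality, using the definition of $d_F$, choose the sign pattern $\bm{q}\in\{-1,1\}^K$ so that $d_F(\hat{\bX},\bm{Q}) = \|\hat{\bX} - \bm{Q}\diag(\bm{q})\|_F$; by absorbing these signs into $\bm{Q}$ I may assume $d_F(\hat{\bX},\bm{Q}) = \|\hat{\bX} - \bm{Q}\|_F$, noting that $h$ is invariant under column sign flips because each term is a quadratic form $\vvx_k^\top \Delta_k \vvx_k$. Writing $\hat{\vvx}_k$ and $\bm{q}_k$ for the $k$th columns and using the polarization-type identity together with $\|\hat{\vvx}_k\|_2 = \|\bm{q}_k\|_2 = 1$, I get
\begin{equation*}
|\hat{\vvx}_k^\top \Delta_k \hat{\vvx}_k - \bm{q}_k^\top \Delta_k \bm{q}_k| = |(\hat{\vvx}_k - \bm{q}_k)^\top \Delta_k \hat{\vvx}_k + \bm{q}_k^\top \Delta_k (\hat{\vvx}_k - \bm{q}_k)| \le 2\|\Delta_k\|\cdot \|\hat{\vvx}_k - \bm{q}_k\|_2.
\end{equation*}
Summing over $k$ and applying Cauchy--Schwarz then gives
\begin{equation*}
h(\hat{\bX}) - h(\bm{Q}) \le 2 \max_{k\in[K]}\|\Delta_k\| \sum_{k=1}^K \|\hat{\vvx}_k - \bm{q}_k\|_2 \le 2\sqrt{K}\max_{k\in[K]}\|\Delta_k\|\cdot d_F(\hat{\bX},\bm{Q}).
\end{equation*}

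Chaining the two displayed inequalities yields $\bar{\eta}\cdot d_F^2(\hat{\bX},\bm{Q}) \le 2\sqrt{K}\max_k\|\Delta_k\|\cdot d_F(\hat{\bX},\bm{Q})$; dividing through by $d_F(\hat{\bX},\bm{Q})$ (trivial if zero) gives the claim. The only mildly delicate points are the bookkeeping with the sign ambiguity in $d_F$ (handled by the invariance of $h$ under column sign flips) and the use of Cauchy--Schwarz to convert a sum of column distances into the full Frobenius distance, which is what produces the $\sqrt{K}$ factor; neither should cause real difficulty.
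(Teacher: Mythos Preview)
Your proposal is correct and follows essentially the same route as the paper: compare $f(\hat{\bX})\ge f(\bm{Q})$, bound $h(\hat{\bX})-h(\bm{Q})$ columnwise by $2\|\Delta_k\|\cdot\|\hat{\vvx}_k-\bm{q}_k\|_2$, apply Cauchy--Schwarz to produce the $\sqrt{K}$ factor, and combine with the global quadratic growth of \Cref{cor: gqg}. The only cosmetic difference is that the paper uses the symmetric factorization $(\hat{\vvx}_k-\bm{Q}_k)^\top\Delta_k(\hat{\vvx}_k+\bm{Q}_k)$ in place of your telescoping split, and you are more explicit than the paper about handling the sign ambiguity in $d_F$.
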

	
	\begin{proof}
		\,From the global optimality of $\hat{\bX}$ of problem \eqref{eq: proheppcasnr}, we have
		\begin{equation*} 
			\sum_{k=1}^K \hat{\vvx}_k^\top (a_k \bm{Q} \bm{\Theta}^2 \bm{Q}^\top) \hat{\vvx}_k + \sum_{k=1}^K \hat{\vvx}_k^\top \Delta_k \hat{\vvx}_k
			\geq \sum_{k=1}^K \bm{Q}_k^\top (a_k \bm{Q} \bm{\Theta}^2 \bm{Q}^\top) \bm{Q}_k + \sum_{k=1}^K \bm{Q}_k^\top \Delta_k \bm{Q}_k.
		\end{equation*}
		Rearranging the terms gives
		\begin{align*}
			& \sum_{k=1}^K \bm{Q}_k^\top (a_k \bm{Q} \bm{\Theta}^2 \bm{Q}^\top) \bm{Q}_k - \sum_{k=1}^K \hat{\vvx}_k^\top (a_k \bm{Q} \bm{\Theta}^2 \bm{Q}^\top) \hat{\vvx}_k \\
			\leq &  \sum_{k=1}^K \hat{\vvx}_k^\top \Delta_k \hat{\vvx}_k - \sum_{k=1}^K \bm{Q}_k^\top \Delta_k \bm{Q}_k
			= \sum_{k=1}^K (\hat{\vvx}_k - \bm{Q}_k)^\top \Delta_k (\hat{\vvx}_k + \bm{Q}_k) \\
			\leq & \sum_{k=1}^K \| \hat{\vvx}_k - \bm{Q}_k\|_2 \cdot \| \Delta_k \| \cdot \| \hat{\vvx}_k + \bm{Q}_k\|_2
			\leq \left(\max_{k\in [K]} \|\Delta_k\| \right) \cdot \sqrt{K \sum_{k=1}^K(\| \hat{\vvx}_k - \bm{Q}_k\|_2^2 \cdot \| \hat{\vvx}_k + \bm{Q}_k\|_2^2)}\\
			\leq & 2\sqrt{K} \max_{k\in [K]} \|\Delta_k\| \cdot d_F(\hat{\bX},\bm{Q}).
		\end{align*}
		Combining the above with the global quadratic growth property in Corollary \ref{cor: gqg} yields
		\begin{align*}
			\bar{\eta} \cdot d_F^2(\hat{\bX},\bm{Q}) \leq  g(\bm{Q}) - g(\hat{\bX}) \leq 2\sqrt{K}\max_{k\in [K]} \|\Delta_k\|\cdot d_F(\hat{\bX},\bm{Q}),
		\end{align*}
		which implies
		\begin{align*}
			d_F(\hat{\bX},\bm{Q}) \leq \frac{2\sqrt{K}}{\bar{\eta}} \max_{k \in [K]} \|\Delta_k \|.
		\end{align*}
		This completes the proof.
	\end{proof}
	

	Note that there is an initialization condition in Theorem \ref{thm: est_performace}. The following result shows that an initialization via the homogeneous PCA can satisfy this condition.
	\begin{lemma}[\textbf{Initialization}]
		Let the initial point $\bX^0$ be the $K$ principal eigenvectors of the sample covariance matrix $ \bm{C} = \frac{1}{n} \sum_{l=1}^L \sum_{i=1}^{n_l} \bm{y}_{l,i} (\bm{y}_{l,i})^\top$. If 
		\begin{equation} \label{eq: initlast}
			\Vert \bm{C}-\mathbb{E}[\bm{C}] \Vert^2 \leq \frac{\delta_1}{ \beta_3 \cdot 2^3 \sum \limits _{j=1} ^ {K} 1/\left(\min^2 \{\lambda_{j-1}-\lambda_{j},\lambda_{j}-\lambda_{j+1}\}\right) } 
		\end{equation}
		holds, then we have $g(\bQ) - g(\bX^0) \leq \delta_1$, where $\delta_1 = \delta^{2} \bar{\eta}$; $\delta,\bar{\eta}$ are two constants defined in Theorem \ref{thm: gpmeb} and Corollary \ref{cor: gqg}, respectively; $\lambda_0,\lambda_{K+1}$ are defined as $+\infty$ and $0$, respectively; and $\beta_3$ is the constant defined in Proposition \ref{prop:convergence_prep}.
		\label{thm:initialization}
	\end{lemma}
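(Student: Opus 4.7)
The plan is to combine a perturbation bound for the top-$K$ eigenvectors of the sample covariance $\bm{C}$ with the cost-to-go estimate established in Proposition \ref{prop:convergence_prep}(b). The key observation is that under the HPPCA model,
\[
\mathbb{E}[\bm{C}] = \bm{Q} \bm{\Theta}^2 \bm{Q}^\top + \left(\sum_{l=1}^L \tfrac{n_l}{n} v_l\right) \bI_d,
\]
so the top-$K$ eigenvectors of $\mathbb{E}[\bm{C}]$ are exactly the columns of $\bQ$ (up to sign), with the $j$-th eigenvalue being $\lambda_j + \sum_l (n_l/n) v_l$ for $j \in [K]$. Moreover, for $j \in [K]$ the eigenvalue gaps are $\lambda_{j-1} - \lambda_j$ above and $\lambda_j - \lambda_{j+1}$ below (using the convention $\lambda_0 = +\infty$, $\lambda_{K+1} = 0$, where $\lambda_{K+1}$ accounts for the repeated noise eigenvalue).

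First, I would apply a column-wise Davis--Kahan $\sin\Theta$ theorem to compare $\bX^0$ (top-$K$ eigenvectors of $\bm{C}$) with $\bQ$ (top-$K$ eigenvectors of $\mathbb{E}[\bm{C}]$). After aligning signs so that $d_F(\bX^0,\bQ) = \|\bX^0 - \bQ\|_F$, the standard Davis--Kahan bound applied to each eigenvector yields
\[
\|\bX^0_j - \bQ_j\|_2 \;\leq\; \frac{2\sqrt{2}\,\|\bm{C} - \mathbb{E}[\bm{C}]\|}{\min\{\lambda_{j-1}-\lambda_j,\;\lambda_j-\lambda_{j+1}\}}, \qquad j \in [K].
\]
Squaring and summing over $j \in [K]$ then gives
\[
d_F^2(\bX^0,\bQ) \;\leq\; 8 \left( \sum_{j=1}^K \frac{1}{\min^2\{\lambda_{j-1}-\lambda_j,\;\lambda_j-\lambda_{j+1}\}} \right) \|\bm{C} - \mathbb{E}[\bm{C}]\|^2.
\]

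Second, I would invoke the cost-to-go estimate of Proposition \ref{prop:convergence_prep}(b) applied to $\bX^0$, namely $g(\bQ) - g(\bX^0) \leq \beta_3 \cdot d_F^2(\bX^0,\bQ)$. Combining this with the Davis--Kahan bound gives
\[
g(\bQ) - g(\bX^0) \;\leq\; \beta_3 \cdot 2^3 \left( \sum_{j=1}^K \frac{1}{\min^2\{\lambda_{j-1}-\lambda_j,\;\lambda_j-\lambda_{j+1}\}} \right) \|\bm{C} - \mathbb{E}[\bm{C}]\|^2,
\]
and the hypothesis \eqref{eq: initlast} is precisely the condition that the right-hand side is at most $\delta_1$.

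The main obstacle is justifying the column-wise Davis--Kahan bound with the correct constant $2\sqrt{2}$ and with the proper per-coordinate gap $\min\{\lambda_{j-1}-\lambda_j,\lambda_j-\lambda_{j+1}\}$; one needs to be careful that the noise floor $\sum_l (n_l/n) v_l$ cancels out of the relevant eigenvalue differences (which it does, since it shifts all eigenvalues uniformly), and that the $j=K$ case uses the spectral gap $\lambda_K - 0 = \lambda_K$ to the remaining $d-K$-fold repeated eigenvalue. The sign alignment needed to identify $d_F(\bX^0,\bQ)$ with $\|\bX^0 - \bQ\|_F$ is straightforward since $d_F$ is defined precisely as the minimum over sign flips. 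Once these pieces are in place, the lemma follows immediately by rearranging the hypothesis.
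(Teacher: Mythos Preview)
Your proposal is correct and matches the paper's proof essentially step for step: the paper applies the Davis--Kahan variant of Yu--Wang--Samworth column by column to obtain the bound $\min\{\|\bx^0_j - \bQ_j\|,\|\bx^0_j + \bQ_j\|\} \le 2^{3/2}\|\bm{C}-\mathbb{E}[\bm{C}]\|/\min\{\lambda_{j-1}-\lambda_j,\lambda_j-\lambda_{j+1}\}$, sums the squares to bound $d_F^2(\bX^0,\bQ)$, and then invokes the cost-to-go estimate $g(\bQ)-g(\bX^0)\le \beta_3\, d_F^2(\bX^0,\bQ)$ to conclude. Your identification of the eigenvalue structure of $\mathbb{E}[\bm{C}]$ and the cancellation of the common shift $\sum_l (n_l/n)v_l$ in the gaps is exactly the reasoning needed.
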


	\begin{proof}\,
		By Davis-Kahan theorem in \cite[Theorem 2]{yu2015useful}, we know
		\begin{equation*}
			\min \left \{\Vert \vvx^0_{j} - \bQ_{j} \Vert _{F}, \Vert \vvx^{0}_j + \bQ_{j}\Vert_{F} \right\}  \leq \frac{2^{\frac{3}{2}} \Vert \bm{C}-\mathbb{E}[\bm{C}] \Vert }{\min\{\lambda_{j-1}-\lambda_{j},\lambda_{j}-\lambda_{j+1}\}}, \quad \forall j\in [K].
		\end{equation*}
		Here, $\vvx_j^0$ denotes the $j$-th column of $\bX^0$, $\bQ_j$ denotes the $j$-th column of $\bQ$, and $\lambda_0,\lambda_{K+1}$ are defined as $+\infty$ and $0$, respectively. It follows that there exists some $ \diag{(\bm{q})} $ with $\bm{q} \in \{1,-1\}^{K} $ such that
		\begin{equation*}
			\Vert \bX^0  - \bQ \diag{(\bm{q})} \Vert _{F} ^{2} \leq  2^{3} \Vert \bm{C}-\mathbb{E}[\bm{C}] \Vert ^{2} \sum \limits _{j=1} ^ {K} \frac{1}{\min^2 \{\lambda_{j-1}-\lambda_{j},\lambda_{j}-\lambda_{j+1}\}}.
		\end{equation*}
		According to the result in \cite[Proposition 9]{liu2019quadratic} (or Proposition \ref{prop:convergence_prep}(b)), we have
		\begin{equation*} 
			\begin{aligned}
				g(\bQ)-g(\bX ^0) & = g(\bQ \diag(\bm{q}))-g(\bX ^0) \leq \beta_3 \cdot \Vert \bX ^0  - \bQ \diag(\bm{q}) \Vert _{F}^2 \\
				& \leq \beta_3 \cdot 2^3  \Vert \bm{C}-\mathbb{E}[\bm{C}] \Vert ^{2} \sum \limits _{j=1} ^ {K} \frac{1}{\min^2 \{\lambda_{j-1}-\lambda_{j},\lambda_{j}-\lambda_{j+1}\}}\\
				& \leq \delta_1.
			\end{aligned}
		\end{equation*}
		This concludes the proof. 
	\end{proof}
	
	The condition \eqref{eq: initlast} in Lemma \ref{thm:initialization} can be achieved when the sample size $n$ is large. This can be shown from Lemma \ref{lemma:ineq} and the following inequality 
	\[
	\Vert \bm{C}-\mathbb{E}[\bm{C}] \Vert \leq \sum \limits _{l=1} ^{L}  \Vert \bm{C}_l -\mathbb{E}[\bm{C}_l] \Vert  \quad \text{  with  } \quad  \bm{C}_l = \frac{1}{n}\sum_{i=1}^{n_l} \bm{y}_{l,i} (\bm{y}_{l,i})^\top.
	\]
	\section{Numerical Simulations}
	\label{s: experiment}
	In this section, we demonstrate the recovery performance of GPM in Algorithm \ref{alg:PGD_al} for the HPPCA problem under both Gaussian and general sub-Gaussian noises. In addition, we assess its robustness against noise and compare it with the classic PCA.
	
	\subsection{Estimation performance and linear rate}
	We consider $n = 1000$ samples with dimension $d=100$. There are $L=2$ data groups: The first $n_1 = 200$ samples are generated with a noise variance $v_1 = 1$ and the others ($n_2 = 800$) are generated with a noise variance $v_2 = 6$. We set the parameter $K$ to be $3$ and set the (diagonal) signal strength matrix $\bm{\Theta}^2$ to be $\diag([5,3.5,2]^\top)$.
	
	We test the performance of GPM in Algorithm \ref{alg:PGD_al} with both a random initialization and a carefully designed PCA initialization under the Gaussian noise setting. The stepsize parameter $\alpha$ is set to be $0.05$. \Cref{fig: EP}(a) shows the fast convergence behavior of Algorithm \ref{alg:PGD_al} with different initialization methods. Note that $\hat{\bX}$ represents the accumulation point of the iterates. Although we have not yet theoretically shown that $\hat{\bX}$ is a global maximizer of problem \eqref{eq: proheppca}, empirically, the obtained solution $\hat{\bX}$ by Algorithm \ref{alg:PGD_al} is indeed a global maximizer, which is certified by solving a feasibility problem according to \cite[Theorem 4.1]{gilman2022semidefinite}. As can be seen from \Cref{fig: EP}(a), the iterates generated by GPM would converge to a global maximizer of problem \eqref{eq: proheppca} with a linear rate. Besides, we show the estimation performance of Algorithm \ref{alg:PGD_al} in \Cref{fig: EP}(b), which demonstrates that the distances between the iterates $\{\bX^t\}_{t\ge 0}$ and the ground truth $\bQ$ decrease to some threshold. This phenomenon is consistent with our theoretical findings in Theorem \ref{thm: est_performace}. In addition, as can be seen from both \Cref{fig: EP}(a) and \Cref{fig: EP}(b), compared with the random initialization strategy, adopting the PCA initialization renders better progress because it is closer to the limiting point $\hat{\bX}$ and the ground truth $\bQ$. 
	
	We also test the performance of GPM
	in Algorithm \ref{alg:PGD_al} with both random initialization and PCA initialization under the more practical sub-Gaussian noise setting, even though the formulation in \eqref{eq: hppca-mle} is no longer an MLE. Specifically, the noises of the first data group are sampled from the uniform distribution over the interval $[-\sqrt{3v_1}, \sqrt{3v_1}]$ with $v_1 = 1/2$. Similarly, the noises of the second data group are sampled from the uniform distribution over the interval $[-\sqrt{3v_2}, \sqrt{3v_2}]$ with $v_2 = 3$. Other settings in this experiment are identical to those in the above experiments with Gaussian noises. Similar to the results in the Gaussian case, \Cref{fig: EP}(c) shows that the iterates generated by GPM would converge linearly to a global maximizer of problem \eqref{eq: proheppca} and \Cref{fig: EP}(d) shows GPM can estimate the ground truth well even under non-Gaussian noises.
	
	\begin{figure}[ht]
		\centering
		\begin{minipage}[b]{0.24\linewidth}
			\centering
			\centerline{\includegraphics[width=\linewidth]{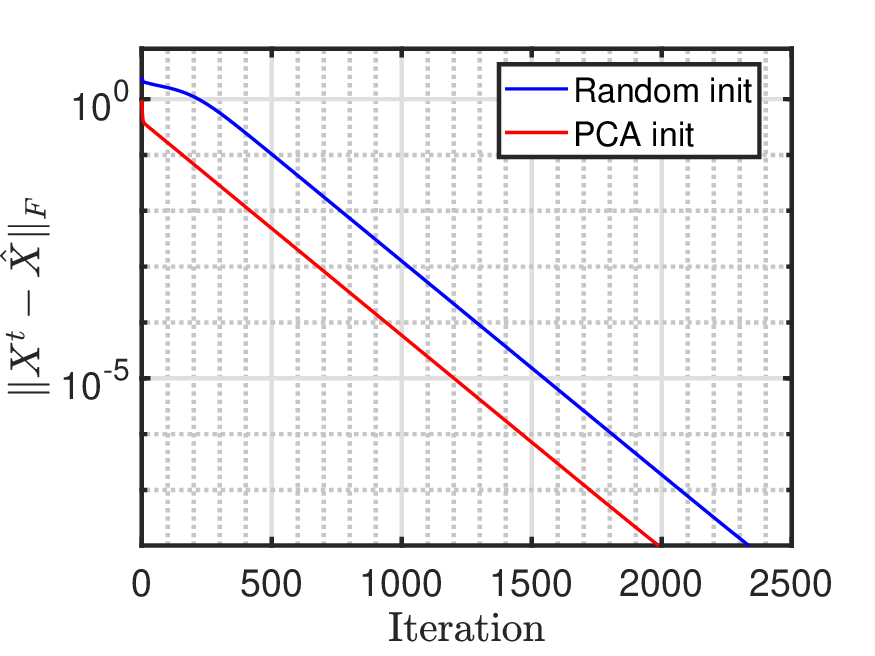}}
			\centerline{(a)}
		\end{minipage}
		\begin{minipage}[b]{0.24\linewidth}
			\centering
			\centerline{\includegraphics[width=\linewidth]{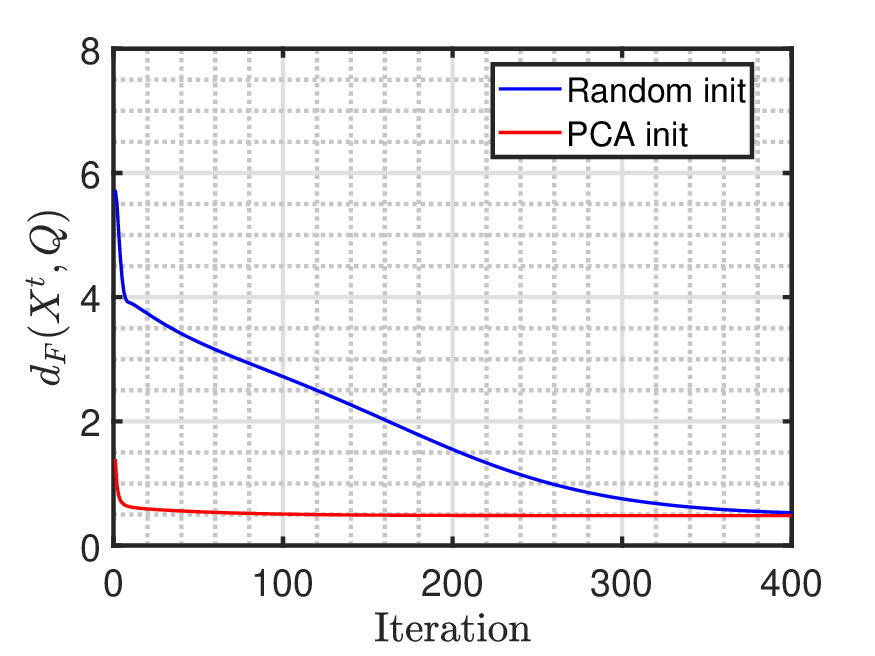}}
			\centerline{(b)}
		\end{minipage}
		\begin{minipage}[b]{0.24\linewidth}
			\centering
			\centerline{\includegraphics[width=\linewidth]{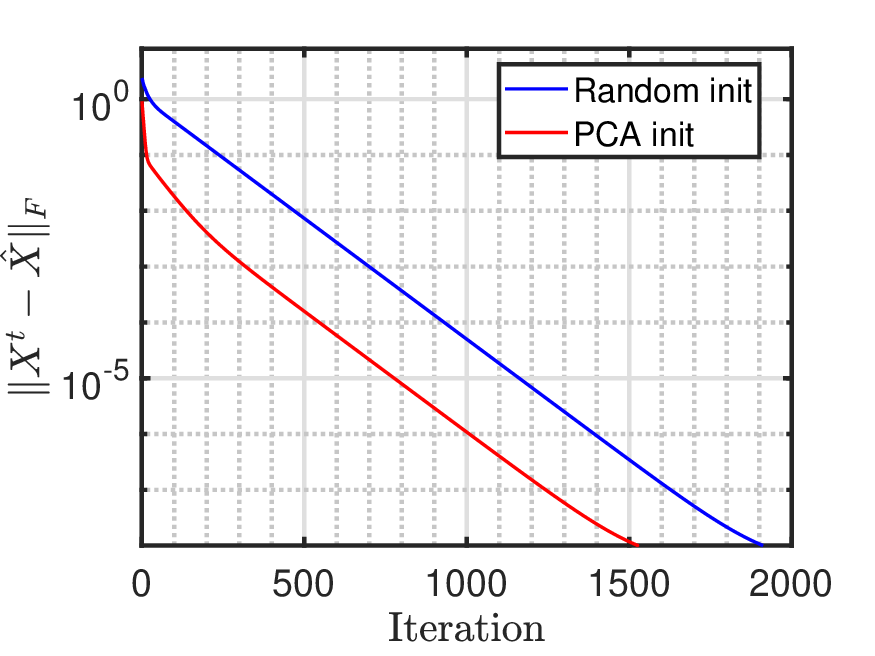}}
			\centerline{(c)}
		\end{minipage}
		\begin{minipage}[b]{0.24\linewidth}
			\centering
			\centerline{\includegraphics[width=\linewidth]{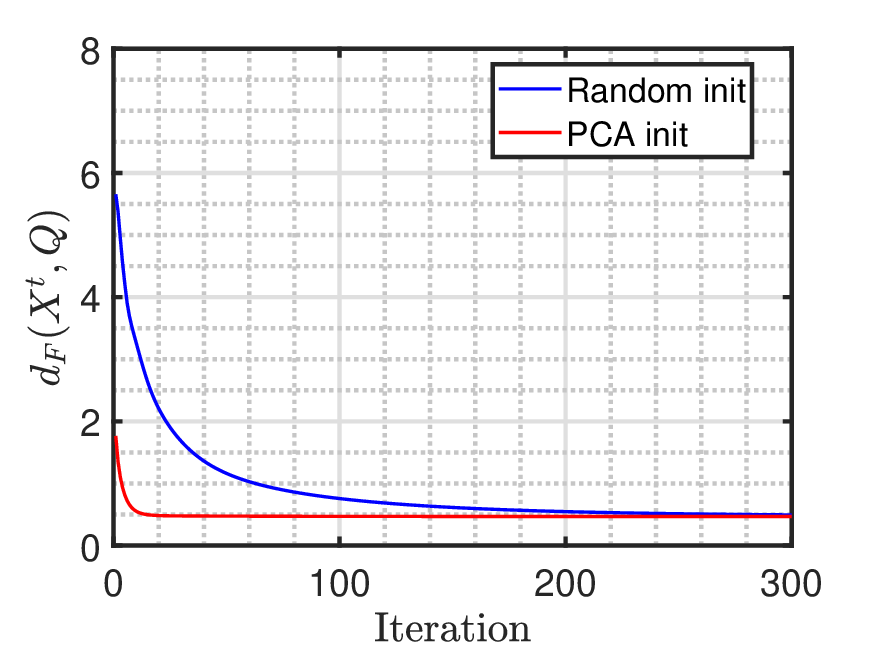}}
			\centerline{(d)}
		\end{minipage}
		\caption{Numerical performance of GPM for HPPCA. The left two figures are under Gaussian noises and they show the linear convergence to a global maximizer and estimation performance, respectively. The right two figures are under sub-Gaussian noises.}
		\label{fig: EP}
	\end{figure}
	
	\subsection{Assessing robustness}
	This experiment also considers $n = 1000$ samples with dimension $d=100$. We assess the robustness of HPPCA/GPM against increasing noise and increasing heterogeneity. The error measure is defined as the subspace distance between the ground truth $\bQ$ and the outputs of different models.
	
	First, if the sample noise level is low, then directly applying PCA may attain satisfactory results. Thus, we compare the performances of PCA and HPPCA with increasing noise levels in \Cref{fig: rob_noise}. We set different noise strength levels in the experiment. Concretely, for the $i$-th noise strength level, the noise variance vector is set to be $v^i = v+\frac{i}{10}\times v$ with $v=\left[\frac{1}{10},\frac{6}{10}\right]$. \Cref{fig: rob_noise} shows that HPPCA can output better estimators than PCA. Second, we test the robustness of HPPCA against increasing noise heterogeneity. For the $i$-th noise strength level, the noise variance vector is $v^i=\left[\frac{1}{10},\frac{6}{10}+\frac{i}{10}\right]$. \Cref{fig:rob_heteo} shows that increasing noise heterogeneity hardly influences the performance of HPPCA while the performance of PCA is quite sensitive to it.
	
	\begin{figure}[ht]
		\begin{minipage}[t]{0.48\linewidth} 
			\centering 
			\includegraphics[scale=0.34]{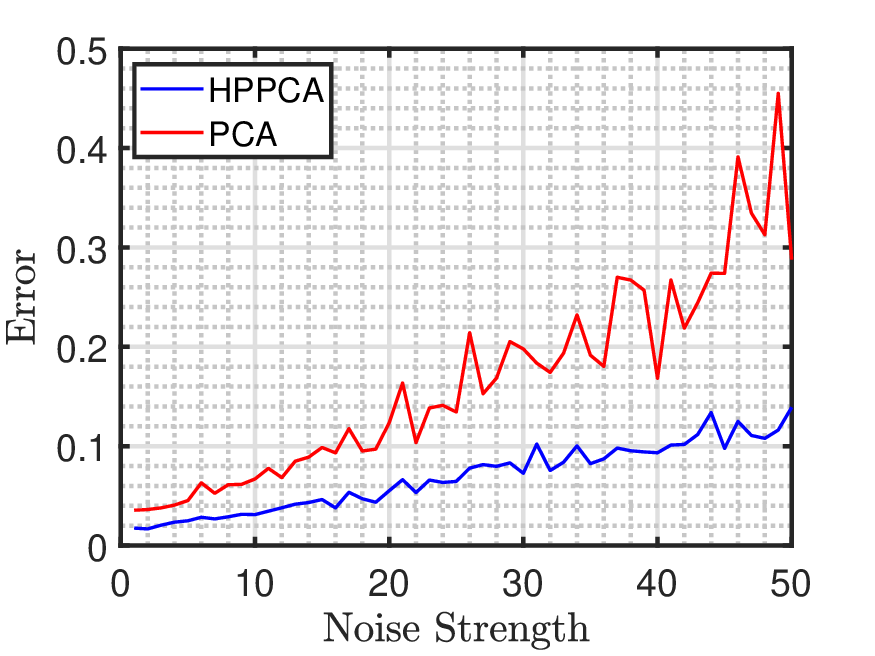} 
			\caption{Robustness against noise.} 
			\label{fig: rob_noise}
		\end{minipage}%
		\begin{minipage}[t]{0.48\linewidth} 
			\centering 
			\includegraphics[scale=0.34]{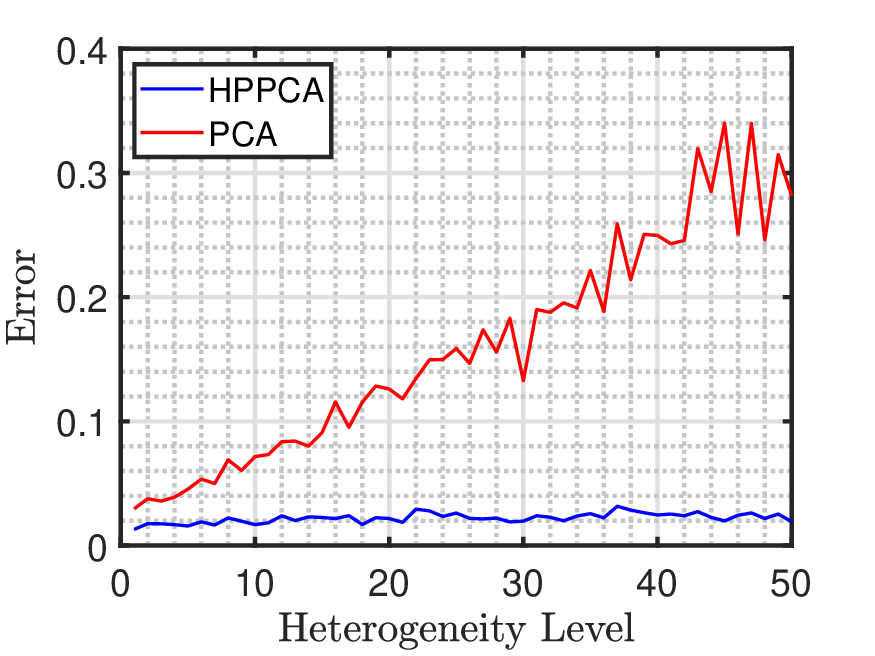}
			\caption{Robustness against heterogeneity.}  
			\label{fig:rob_heteo}
		\end{minipage} 
	\end{figure}

	\section{Concluding Remarks}
	We presented the generalized power method (GPM) to tackle the HPPCA problem and established its estimation performance guarantee as well as the associated linear rate. These theoretical results are obtained via a powerful error bound-based analysis framework. There are several future directions. First, numerical results show that GPM with random initialization works well but there is no theoretical guarantee. As such, providing its theoretical justifications would be intriguing. Second, numerical results show that GPM also performs well for HPPCA with sub-Gaussian noises. Hence, extending our theoretical results to tackle the general sub-Gaussian noise in the HPPCA setting is definitely worth exploring. Third, it would be exciting to establish the linear convergence result of GPM to a global maximizer of the HPPCA problem and extend strong theoretical results of GPM to more general settings (e.g., maximizing a sum of general heterogeneous quadratic forms with an orthogonality constraint). Last but not least, it is also interesting to obtain sharp entrywise estimation error bound by the ``leave-one-out'' technique in \cite{zhong2018near,chen2021spectral}.
	
	\section*{Acknowledgments}
	The authors thank Prof. Laura Balzano and Dr. Peng Wang for their helpful discussions.
	
	\bibliographystyle{plain}
	\bibliography{refs}
\end{document}